\newcommand{\GFq}{\F_{q}}
\pgfplotsset{compat=newest}
\theoremstyle{definition}
\newtheorem{theorem}{Theorem}
\newtheorem{corollary}[theorem]{Corollary}
\newtheorem{proposition}[theorem]{Proposition}
\newtheorem{definition}[theorem]{Definition}
\newtheorem{example}[theorem]{Example}
\newtheorem{remark}[theorem]{Remark}
\newtheorem{lemma}[theorem]{Lemma}
\newcommand\qbin[3]{\left[\begin{matrix} #1 \\ #2 \end{matrix} \right]_{#3}}
\newcommand{\numberset}{\mathbb}
\newcommand{\N}{\numberset{N}}
\newcommand{\F}{\numberset{F}}
\newcommand{\fq}{\F_q}
\newcommand{\fqm}{\F_{q^m}}
\newcommand{\fqmn}{\F_{q^m}^n}
\newcommand{\Tr}{\textnormal{Tr}}
\newcommand{\mC}{\mathcal{C}}
\newcommand{\mS}{\mathcal{S}}
\newcommand{\mD}{\mathcal{D}}
\newcommand{\rk}{\textnormal{rk}}
\newcommand{\mB}{\mathcal{B}}
\newcommand{\mat}{\F_q^{n \times m}}
\renewcommand{\longrightarrow}{\to}
\newcommand{\GL}{\textnormal{GL}}
\newcommand{\SL}{\textnormal{SL}}
\definecolor{light-gray}{gray}{0.95}
\newcommand*{\myproofname}{Proof of the claim}
\begin{document}

\title[An Assmus-Mattson Theorem for Rank Metric Codes]{An Assmus-Mattson Theorem for Rank Metric Codes}
\author[Eimear Byrne]{Eimear Byrne}
\address{School of Mathematics and Statistics, University College Dublin, Belfield, Ireland}
\curraddr{}
\email{ebyrne@ucd.ie}
\thanks{}

\author[Alberto Ravagnani]{Alberto Ravagnani$^*$}
\address{School of Mathematics and Statistics, University College Dublin, Belfield, Ireland}
\curraddr{}
\email{alberto.ravagnani@ucd.ie}
\thanks{$^*$The author was partially supported by the Swiss National Science Foundation through grant n. P2NEP2\_168527 and by the the Marie Curie Research Grants
	Scheme, grant n. 740880.}

\subjclass[2010]{ 11T71, 05B05, 05E20}

\keywords{subspace design, design over $\GFq$, rank metric code, Assmus-Mattson Theorem, weight distribution, MacWilliams identities.}

\maketitle

\thispagestyle{empty}

\begin{abstract}
A $t$-$(n,d,\lambda)$ design over $\GFq$, or a subspace design, 
	is a collection of $d$-dimensional subspaces of $\GFq^n$, called {\em blocks}, with the property that every     
	$t$-dimensional subspace of $\GFq^n$ is contained in the same number $\lambda$ of blocks. 
	A collection of matrices in $\GFq^{n \times m}$ is said to hold a $t$-design over $\GFq$ if the set of column spaces
	 of its elements forms the blocks of a subspace design. 
	 We use notions of puncturing and shortening of rank metric codes and the rank-metric MacWilliams identities to establish conditions under which the words of a given rank in a linear rank metric code hold a $t$-design over $\GFq$.
     We show that for $\fqm$-linear vector rank metric codes, the property of a code being MRD is equivalent to its minimal weight codewords holding trivial subspace designs and show that this characterization does not hold for $\GFq$-linear matrix MRD code that are not linear over $\fqm$. 
\end{abstract}

\section*{Introduction}\label{sec:intro}

The celebrated Assmus-Mattson Theorem \cite{AM} is among the best-known results in the theory of codes and designs. It has led to several constructions of $t$-designs \cite{AM,assmuskey,cameronvanlint,hupless,macws} and has been the subject of a number of generalizations \cite{bachoc,calddelsloane,del,moralestanaka,shinkumahelleseth}. 

The theorem specifies criteria under which the blocks of a combinatorial design are {\em held} by the words of a fixed weight in a linear code. It shows, in particular, that if the dual of a code has few non-zero weights in an interval, then the supports of codewords of fixed Hamming weight close enough to the minimum distance of the code form the blocks of a design.

The notion of a $t$-design over $\GFq$ (or a subspace design) has appeared in the literature since the 1970s \cite{cambcc}, and various constructions of such objects were given in the past \cite{Itoh,thomas,suzuki92}. 
A $t$-$(n,d,\lambda)$ design over $\GFq$ is a collection of $d$-dimensional subspaces of $\GFq^n$, called {\em blocks}, with the property that every $t$-dimensional subspace of $\GFq^n$ is contained in the same number $\lambda$ of blocks. 
There was a resurgence of interest in such designs during the last decade, due in part to the fact that some subspace designs are optimal as constant weight subspace codes. Subspace codes have been shown to have applications to error correction in network coding \cite{kk}. 
It was shown in \cite{flv} that non-trivial subspace designs exist over any finite field and for every value of $t$, as long as the parameters $n,d,\lambda$ are sufficiently large.
There are now several papers showing the existence of such objects for various parameter sets, most of which rely on assumptions of their automorphism groups \cite{braun05,braunetz,bkkljcta,braunkierwass,braunkierwass2}.

In this paper we give conditions under which a linear rank metric code yields a $t$-design over $\GFq$. 
Explicitly, we show that if $\mC$ and its dual $\mC^*$ are $w$ and $w^*$-{\em invariant}, respectively, and $\mC^*$ has at most $d-t$ non-zero weights in $\{1,...,n-t\}$, then the codewords of $\mC$ and $\mC^*$ of ranks $w$ and $w^*$, respectively, hold $t$-designs over $\F_q$. This result offers a new approach to the difficult problem of constructing new subspace designs and a motivation to study new classes of rank metric codes. 
We also consider existence problems applying recent results on the covering radius and the external distance of codes. These arguments apply to both the rank and the Hamming metric. 

In Section 2 we give preliminary results on rank metric codes and on subspace designs. In Section 3 we present our main result, the rank-metric analogue of the Assmus-Mattson Theorem.  
In Section 4 we restrict to the class of $\fqm$-$[n,k,d]$ linear codes, which are always $d$-invariant. We show that the {\em maximum rank distance} (MRD) codes, those with parameters $\fqm$-$[n,k,n-k+1]$, are characterized as precisely the codes whose minimal weight codewords hold {\em trivial} $t$-designs. We show, furthermore, that the codes that are linear only over $\fq$ do not satisfy this property. In Section 5 we discuss asymptotic existence of codes satisfying the hypothesis of the Assmus-Mattson Theorem, for both the Hamming and the rank metric. 



\section{Preliminaries}

\subsection{Matrix Rank Metric Codes}

We outline some preliminary results on rank metric codes. Throughout the paper, $q$ denotes a fixed prime power and $m,n$ are integers with
$m,n \ge 2$. We let $\F_q$ be the finite field with $q$ elements.

Given a pair of nonnegative integers $N$ and $M$, the $q$-\textbf{binomial} or \textbf{Gaussian coefficient} counts the number of $M$-dimensional subspaces of an $N$-dimensional subspace over $\fq$ and is given by:
$$\qbin{N}{M}{q}:=\prod_{i=0}^{M-1}\frac{q^N-q^i}{q^M-q^i}.$$

For a subspace $U$ of $\fq^n$ we define $U^\perp:=\{ v \in \fq^n : u \cdot v = 0\}$ to be the orthogonal space of $U$ with respect to the standard
inner product $u\cdot v =\sum_{i=1}^{n} u_i v_i$.

\begin{definition}
	The \textbf{rank distance} between a pair of matrices $X,Y \in \F_{q}^{n\times m}$ is defined to be the rank of their difference:
	$$d_{\rk}(X,Y):=\rk(X-Y).$$
\end{definition}

\begin{definition}
	A (linear) \textbf{matrix code} $\mC$ is an $\fq$-subspace of $\F_q^{n \times m}$. 
	The \textbf{minimum distance} of $\mC$ is $$d_\rk(\mC):= \min\{d_{rk}(X,Y) : X,Y \in \mC, \ X \neq Y\} = 
	\min\{\rk(X) : X \in \mC, \ X \neq 0\}.$$
	We say that $\mC$ is an $\fq$-$[n \times m,k,d]$ code if it has $\fq$-dimension $k$ and minimum rank distance $d$.
	The code $\mC^*$ denotes the \textbf{dual code} of $\mC$ with respect to the bilinear form on $\mat$ defined by
	$(X,Y) \mapsto  \langle X,Y \rangle := \Tr(XY^\top)$,
	that is, 
	$$ \mC^*:= \{ Y \in \fq^{n \times m}: \Tr(XY^\top)=0  \mbox{ for all }  X \in \mC\}.$$ 
\end{definition}

\begin{definition}
	Let $\mC$ be an $\fq$-$[n \times m, k,d]$ matrix code. 
	We define 
	$$W_i(\mC):=|\{X \in \mC : \rk(X)=i \}|,$$ which is the number of codewords of $\mC$ of rank $i$.
	The
	(\textbf{rank}) \textbf{weight distribution} of $\mC$ is the sequence $(W_i(\mC):  i \in \N)$. We say that an integer $i$ is a \textbf{non-zero weight} of $\mC$ if $W_i(\mC) \neq 0$.
\end{definition}

The weight distributions of a linear matrix code and its dual are related by the rank metric MacWilliams identities \cite{del}. We will use the following formulation from~\cite[Theorem 31]{alb1}.

\begin{theorem}\label{th:macwids}
	Let $\mC$ be an $\fq$-$[n \times m,k,d]$ matrix code. Then for each $\ell \in \{0,...,n\}$ we have:
	\begin{eqnarray}\label{eq:macw}
	\sum_{i=0}^{n-\ell} W_{i}(\mC^*)\qbin{n-i}{\ell}{q} & = & q^{m(n-\ell)k}
	\sum_{i=0}^{\ell} W_{i}(\mC)\qbin{n-i}{\ell-i}{q}.
	\end{eqnarray}
\end{theorem}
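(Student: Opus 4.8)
The statement is the rank-metric MacWilliams identity, and the plan is to derive it by a Poisson-summation (character-sum) argument with a carefully chosen test function, so that the two sides of the Poisson formula reproduce the two Gaussian-binomial sums in~\eqref{eq:macw} essentially without manipulation.

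First I would fix a nontrivial additive character $\chi\colon\fq\to\C^\times$ and record the standard tools on $\mat$ equipped with the nondegenerate symmetric bilinear form $\langle X,Y\rangle=\Tr(XY^\top)$, with respect to which $\mC^*$ is the annihilator of $\mC$. These are: (i) the orthogonality relation that $\sum_{X\in U}\chi(\langle X,Y\rangle)$ equals $|U|$ when $\langle X,Y\rangle=0$ for all $X\in U$ and $0$ otherwise, valid for every $\fq$-subspace $U\le\mat$; and (ii) Poisson summation, $\sum_{X\in\mC}f(X)=|\mC^*|^{-1}\sum_{Y\in\mC^*}\widehat f(Y)$, where $\widehat f(Y):=\sum_{X\in\mat}f(X)\,\chi(\langle X,Y\rangle)$ and $f\colon\mat\to\C$ is arbitrary.

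For the fixed $\ell$ I would then take $f(X)$ to be the number of $\ell$-dimensional subspaces $V\le\fq^n$ containing the column space of $X$; counting extensions of a fixed subspace gives $f(X)=\qbin{n-\rk X}{\ell-\rk X}{q}$ (with the usual convention that a Gaussian coefficient vanishes outside the admissible range), so summing over $\mC$ and grouping codewords by rank yields $\sum_{X\in\mC}f(X)=\sum_{i=0}^{\ell}W_i(\mC)\qbin{n-i}{\ell-i}{q}$, a scalar multiple of the right-hand side of~\eqref{eq:macw}. The crux is the evaluation of $\widehat f$. Writing $f$ as the sum over all $\ell$-dimensional $V\le\fq^n$ of the indicator function of the condition ``$\textnormal{colsp}(X)\subseteq V$'' and interchanging the two summations, $\widehat f(Y)=\sum_{\dim V=\ell}\ \sum_{X:\,\textnormal{colsp}(X)\subseteq V}\chi(\langle X,Y\rangle)$. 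Here $\{X\in\mat:\textnormal{colsp}(X)\subseteq V\}$ is an $\fq$-subspace of dimension $m\ell$, and the one step that needs a genuine (short) verification is that its orthogonal complement under the trace form is exactly $\{Y\in\mat:\textnormal{colsp}(Y)\subseteq V^\perp\}$. Granting this, (i) shows the inner sum is $q^{m\ell}$ when $\textnormal{colsp}(Y)\subseteq V^\perp$ --- equivalently $V\subseteq\textnormal{colsp}(Y)^\perp$ --- and $0$ otherwise, so $\widehat f(Y)=q^{m\ell}\qbin{n-\rk Y}{\ell}{q}$.

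Substituting this into Poisson summation and grouping the codewords of $\mC^*$ by rank gives
\[
\sum_{i=0}^{\ell}W_i(\mC)\qbin{n-i}{\ell-i}{q}=\frac{q^{m\ell}}{|\mC^*|}\sum_{i=0}^{n-\ell}W_i(\mC^*)\qbin{n-i}{\ell}{q},
\]
and rearranging, together with $|\mC^*|=q^{nm-k}$, yields~\eqref{eq:macw} with constant $q^{m(n-\ell)-k}$. I expect essentially all of the work to be concentrated in the single identification $\{X:\textnormal{colsp}(X)\subseteq V\}^\perp=\{Y:\textnormal{colsp}(Y)\subseteq V^\perp\}$ and its dimension count; everything else is bookkeeping with $q$-binomial coefficients and their vanishing ranges. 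A more structural alternative is Delsarte's original approach: view $\mat$ as the bilinear-forms association scheme, identify its eigenvalues (the $q$-analogues of the Krawtchouk polynomials), and read~\eqref{eq:macw} off the general MacWilliams transform for association schemes; the Poisson-summation argument above, however, is self-contained and follows the shape of the stated identity more transparently.
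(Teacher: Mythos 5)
Your proof is correct. Note first that the paper itself does not prove this statement: it is quoted from the literature (Theorem~31 of the cited duality paper of Ravagnani), so there is no in-paper argument to compare against line by line. Your Poisson-summation proof is a valid, self-contained derivation, and it is essentially the standard character-theoretic route: your evaluation of $\widehat f$ via the identification $\{X:\sigma(X)\le V\}^{\perp}=\{Y:\sigma(Y)\le V^{\perp}\}$ (which does check out, by the column-wise expansion $\Tr(XY^{\top})=\sum_j x_j\cdot y_j$ together with the dimension count $m\ell+m(n-\ell)=mn$) is exactly the content of the cited source's key lemma, reproduced in this paper as Lemma~\ref{lem:cu}, summed over all $\ell$-dimensional $V$; the rest is the bookkeeping you describe. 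One point worth making explicit: your computation yields the constant $q^{m(n-\ell)-k}$, whereas the display in Theorem~\ref{th:macwids} reads $q^{m(n-\ell)k}$. The latter is a typo in the statement (take $\ell=0$: both sides must equal $|\mC^*|=q^{mn-k}$), and indeed the paper itself uses $q^{m(n-\ell)-k}$ when it invokes the identity in the proof of Proposition~\ref{prop:punwd}; so you have proved the intended, correct form of the theorem.
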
 

The left-hand side of (\ref{eq:macw}) corresponds to multiplication of the weight distribution of $\mC$ by the upper triangular $q$-Pascal matrix
$$ M = \left(  \qbin{n-i}{\ell}{q} \right)_{0 \leq i,\ell \leq n}.$$
The $\ell \times \ell$ minors found in the first $\ell$ rows of $M$ can be expressed as
\begin{eqnarray}\label{eq:minors}
\det \left(  \qbin{r_i}{j-1}{q} \right)_{1 \leq i,j \leq \ell} = q^{\binom{\ell}{2}} \prod_{1\leq i < j \leq \ell} \frac{q^{r_j}-q^{r_i}}{q^j -q^i},
\end{eqnarray}
for some $r_1,...,r_\ell \in \{1,...,n\}$, (see, for example, \cite{dodgy}).

Clearly the determinant shown above is non-zero if the $r_j$ are all distinct and in particular if $r_j=n-i_j$, for distinct $i_1,...,i_\ell$. We will use this to establish invariance of the weight distribution of certain puncturings of a code whose dual code has few non-zero weights.

In the construction of a design over $\F_{q}$ from a set or space of matrices, we will identify the {\em blocks} of the design with the column spaces of the matrices.
This is the matrix analogue of the notion of the support of a vector.   

Given a matrix $X$ over $\fq$, we write $\sigma(X)$ to denote its column space, which we call the \textbf{support} of $X$. 
In fact this concept can be viewed in the more general lattice-theoretic setting of additive codes in groups \cite{alblatt}.

\begin{definition}
	Let $\mC$ be an $\fq$-$[n \times m, k,d]$ code. 
	Let $U$ be a subspace of $\fq^n$.
	We define
	\begin{equation*}
	\mC(U):= \{ X \in \mC : {\rm \sigma}(X) \le U \} \qquad \text{ and } \qquad \mC_=(U):= \{ X \in \mC : {\rm \sigma}(X) = U \}.
	\end{equation*} 
	Note that $\mC(U)$ is the subcode of $\mC$ consisting of all codewords of $\mC$ whose supports are contained in $U$.
	The set $\mC_=(U)$ comprises those codewords with support equal to $U$.  
\end{definition}

\begin{definition}
	Let $\mC$ be an $\fq$-$[n \times m, k,d]$ code.
	Let $U$ be a $u$-dimensional subspace of $\fq^n$.
	We say that $U$ is a \textbf{$u$-support} of $\mC$ if $\mC_=(U) \neq \emptyset$.
\end{definition}


\begin{definition}
	Let $\mC$ be an $\fq$-$[n \times m, k,d]$ code, let $A \in \GL_n(\fq)$ and let $S \subset [n]$.
	For each matrix $X \in \fq^{n\times m}$, we denote by $X_S$ the $|S| \times m$ matrix obtained from $X$ by deleting each $i$th row of $X$ for $i \notin S$.
	That is, $X_S$ is the projection of $X$ onto the rows indexed by $S$.
	We define the projection map 
	$$\pi_s: \fq^{n\times m} \to \fq^{(n-s) \times m}, \qquad X \mapsto X_{\{s+1,...,n\}}.$$
	We define $A \mC:=\{A X : X \in \mC \}$.
	Let $1 \leq s \leq n$. 
	The \textbf{punctured code} of $\mC$ with respect to $A$ and $s$ is
	$$\Pi(\mC,A,s):=\left\{ \pi_s(AX) = (AX)_{\{s+1,...,n\}} \ : \ X \in \mC \right\}.$$
	We define the \textbf{shortened code} of $\mC$ with respect to $A$ and $s$ by
	$$\Sigma(\mC,A,s):=\left\{ \pi_s(AX) \ : \ X \in \mC, \  (AX)_{\{1,...,s\}}=0 \right\}.$$	
\end{definition}	
In the case that $A$ is the identity matrix, the corresponding punctured code is found simply by deleting the first $s$ rows and the shortened code is found by selecting the codewords whose first $s$ rows are all-zero and then deleting these rows from each selected codeword.
We have the following duality result \cite{byrneravagnani}.
\begin{lemma} \label{lem:duality}
	Let $\mC$ be an $\fq$-$[n \times m, k,d]$ code and let $A \in \GL_n(\fq)$. Let $1 \leq s \leq n$ be an integer.
	Then
	\begin{equation*}
	\Pi(\mC,A,s)^* = \Sigma(\mC^*,(A^\top)^{-1},s).
	\end{equation*}
	In particular, $(A\mC)^*=(A^\top)^{-1} \mC^*$ for any $A\in \GL_n(\fq)$. 
\end{lemma}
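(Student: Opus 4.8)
The plan is to reduce the general identity to the case $A=I$ by means of the final (``in particular'') assertion, which I would prove first. The key algebraic fact is that left multiplication by $A$ and left multiplication by $A^\top$ are adjoint with respect to the trace form: for all $X,Y \in \fq^{n\times m}$,
$$\langle AX, Y\rangle = \Tr(AXY^\top) = \Tr\big(X(A^\top Y)^\top\big) = \langle X, A^\top Y\rangle.$$
Consequently $Y \in (A\mC)^*$ if and only if $\langle X, A^\top Y\rangle = 0$ for all $X \in \mC$, i.e. $A^\top Y \in \mC^*$, i.e. $Y \in (A^\top)^{-1}\mC^*$; this is the displayed equality $(A\mC)^* = (A^\top)^{-1}\mC^*$.

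Next I would observe that both operations appearing in the statement factor through a left multiplication. Directly from the definitions, $\Pi(\mC,A,s) = \Pi(A\mC,I,s)$, and --- substituting $Y = (A^\top)^{-1}X$, which as $X$ runs over $\mC^*$ makes $Y$ run over $(A^\top)^{-1}\mC^*$ while turning the constraint $((A^\top)^{-1}X)_{\{1,\dots,s\}}=0$ into $Y_{\{1,\dots,s\}}=0$ --- one gets $\Sigma(\mC^*,(A^\top)^{-1},s) = \Sigma\big((A^\top)^{-1}\mC^*,I,s\big) = \Sigma\big((A\mC)^*,I,s\big)$, using the first step for the last equality. Hence it suffices to prove the untwisted identity $\Pi(\mD,I,s)^* = \Sigma(\mD^*,I,s)$ for an arbitrary $\fq$-linear matrix code $\mD \subseteq \fq^{n\times m}$, with duals computed in $\fq^{(n-s)\times m}$.

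For that, I would prove the two inclusions by direct computation with the trace form. If $Y \in \mD^*$ has vanishing first $s$ rows, then for every $X \in \mD$ the rows deleted by $\pi_s$ contribute nothing to $\Tr(XY^\top)$, so $\langle \pi_s(X), \pi_s(Y)\rangle = \Tr(XY^\top) = 0$; this gives $\Sigma(\mD^*,I,s) \subseteq \Pi(\mD,I,s)^*$. Conversely, given $Y' \in \Pi(\mD,I,s)^*$, let $\bar Y \in \fq^{n\times m}$ be obtained by prepending $s$ zero rows to $Y'$; then for every $X \in \mD$ we have $\langle X, \bar Y\rangle = \langle \pi_s(X), Y'\rangle = 0$ since $\pi_s(X) \in \Pi(\mD,I,s)$, so $\bar Y \in \mD^*$, and since $\bar Y$ has zero first $s$ rows, $Y' = \pi_s(\bar Y) \in \Sigma(\mD^*,I,s)$. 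No dimension count is needed. The only point requiring care is this last step: one must check that the zero-padded matrix $\bar Y$ really lies in the dual code, which works precisely because in $\Tr(X\bar Y^\top)$ only rows $s+1,\dots,n$ contribute --- equivalently, because the row-supported subspaces $\{Z : Z_{\{1,\dots,s\}}=0\}$ and $\{Z : Z_{\{s+1,\dots,n\}}=0\}$ are exact orthogonal complements under $\langle\,\cdot\,,\cdot\,\rangle$. I expect this bookkeeping with the transpose-inverse twist in $(A^\top)^{-1}$, rather than any genuine difficulty, to be the only mild obstacle.
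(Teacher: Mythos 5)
Your argument is correct and complete. Note that the paper itself gives no proof of this lemma---it is quoted from the covering-radius paper of Byrne and Ravagnani---so there is nothing internal to compare against; your route (adjointness of left multiplication by $A$ and by $A^\top$ under the trace form, reduction to the untwisted case $A=I$, and then the two inclusions via the observation that $\Tr(XY^\top)=\sum_{i,j}X_{ij}Y_{ij}$ splits row-by-row, so that zero-padding a dual word of the punctured code lands in $\mC^*$) is the standard one and each step checks out. In particular you rightly avoid any dimension count by proving both inclusions directly, and the substitution $Y=(A^\top)^{-1}X$ correctly converts $\Sigma(\mC^*,(A^\top)^{-1},s)$ into $\Sigma((A\mC)^*,I,s)$.
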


We will apply Lemma \ref{lem:duality} a number of times throughout the paper.


\subsection{Designs over $\F_q$} 

We briefly recall the definition of $q$-design and the known constructions. In this paper, we only treat simple $q$-designs.

\begin{definition}
	Let $n\geq r \geq t$ be positive integers and let
	$\mD$ be a collection of $r$-dimensional subspaces of $\fq^n$. We say that $\mD$ forms (the \textbf{blocks} of) a \textbf{$t$-design} over $\fq$ if every $t$-dimensional subspace of $\fq^n$ is contained in the same number $\lambda$ of elements of $\mD$. In this case we say that $\mD$ is a $t$-$(n,r,\lambda)$ design over $\fq$. 
\end{definition}

Designs over $\fq$ are also known as {\em subspace designs} and as \textit{designs over finite fields}. A $t$-$(n,r,1)$ subspace design is called a $q$-Steiner system.
The interested reader is referred to the survey \cite{braunkierwass} and the references therein for an outline of the state of the art on designs over finite fields.

We have the following notion of a \textbf{dual subspace design} \cite{KP,suzuki}.

\begin{lemma}\label{lem:dualdesign}
	Let $n,r,t,\lambda$ be positive integers and let $\mD$ be a $t$-$(n,r,\lambda)$ design over $\fq$. Define
	$\mD^\perp:=\{ U^\perp : U \in \mD \}$. Then
	$\mD^\perp$ is a $$t- \left(n,n-r,\lambda\qbin{n-t}{r}{q}\qbin{n-t}{r-t}{q}^{-1} \right)$$ design over $\fq$.
\end{lemma}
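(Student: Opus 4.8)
The plan is to reduce the assertion, by means of the inclusion-reversing bijection $U\mapsto U^\perp$ on the subspaces of $\fq^n$, to a counting statement about $\mD$ alone. For a $t$-dimensional subspace $W$ and an $r$-dimensional subspace $U$ one has $W\le U^\perp$ if and only if $U\le W^\perp$, so the number of blocks of $\mD^\perp$ containing a fixed $t$-dimensional subspace $W$ equals $h(W^\perp)$, where
\[ h(Y) := |\{U \in \mD : U \le Y\}| \]
denotes, for a subspace $Y$ of $\fq^n$, the number of blocks of $\mD$ contained in $Y$. As $W$ ranges over the $t$-dimensional subspaces of $\fq^n$, the subspace $W^\perp$ ranges over all $(n-t)$-dimensional subspaces, so it suffices to show that $h(Y)$ takes the same value for every $(n-t)$-dimensional $Y$, and then to identify that value with $\lambda\qbin{n-t}{r}{q}\qbin{n-t}{r-t}{q}^{-1}$. (If $r+t>n$ then no $r$-dimensional subspace fits inside an $(n-t)$-dimensional one, so $h(Y)=0$; the claimed value is also $0$ because $\qbin{n-t}{r}{q}=0$, and we may therefore assume $r+t\le n$.)

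Next I would recall the standard fact that $\mD$ is also an $i$-$(n,r,\lambda_i)$ design over $\fq$ for each $0\le i\le t$, with $\lambda_i=\lambda\qbin{n-i}{t-i}{q}\qbin{r-i}{t-i}{q}^{-1}$ and $\lambda_0=|\mD|$; this follows by double counting the chains $(Z,Z',U)$ with $\dim Z=i$, $\dim Z'=t$, $Z\le Z'\le U$ and $U\in\mD$. Now fix an $(n-t)$-dimensional subspace $Y$ and set $n_u:=|\{U\in\mD:\dim(U\cap Y)=u\}|$ for $r-t\le u\le r$, so that $h(Y)=n_r$. For each $i\in\{0,\dots,t\}$ I would double count the pairs $(Z,U)$ with $\dim Z=i$, $Z\cap Y=0$, $Z\le U$ and $U\in\mD$. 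Summing over $Z$ first: each such $Z$ lies in exactly $\lambda_i$ blocks, and the number of $i$-dimensional subspaces of $\fq^n$ meeting $Y$ trivially is $q^{i(n-t)}\qbin{t}{i}{q}$, so the count equals $\lambda_i\, q^{i(n-t)}\qbin{t}{i}{q}$. Summing over $U$ first: for $U\in\mD$ with $\dim(U\cap Y)=u$, the subspaces $Z\le U$ with $\dim Z=i$ and $Z\cap Y=0$ are precisely the $i$-dimensional subspaces of $U$ disjoint from $U\cap Y$, of which there are $q^{iu}\qbin{r-u}{i}{q}$. Equating the two counts gives, for $0\le i\le t$,
\[ \sum_{u=r-t}^{r} n_u\, q^{iu}\qbin{r-u}{i}{q} \;=\; \lambda_i\, q^{i(n-t)}\qbin{t}{i}{q}. \]

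The crux, and the step I expect to be the main obstacle, is now to observe that this linear system in the unknowns $n_{r-t},\dots,n_r$ is triangular with nonzero diagonal entries, so that its unique solution --- and in particular $n_r=h(Y)$ --- is a fixed rational combination of the right-hand sides, none of which depends on $Y$. Indeed $\qbin{r-u}{i}{q}=0$ unless $u\le r-i$, so the equation indexed by $i=t-j$ (for $j=0,1,\dots,t$) involves only the unknowns $n_{r-t},\dots,n_{r-t+j}$, and the coefficient of the last of them, $n_{r-t+j}$, equals the nonzero number $q^{(t-j)(r-t+j)}$; thus $i=t$ determines $n_{r-t}$, then $i=t-1$ determines $n_{r-t+1}$, and descending to $i=0$ determines $n_r$. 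This proves that $h(Y)=:\lambda'$ is independent of $Y$, hence that $\mD^\perp$ is a $t$-$(n,n-r,\lambda')$ design over $\fq$. It is worth stressing why this needs an argument at all: for ordinary set designs the complementary design falls out of a one-line inclusion--exclusion, because ``$B\cap W=\emptyset$'' is an event detected by the subsets of $W$; no such clean inclusion--exclusion exists for ``$U\le Y$'' in the subspace lattice, which is exactly why the triangular-system (or an equivalent linear-algebra) argument is required.

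It remains to pin down $\lambda'$, which I would do by a separate double count rather than by unwinding the triangular system. Counting the pairs $(U,W)$ with $U\in\mD$, $\dim W=t$ and $W\le U^\perp$ (equivalently $U\le W^\perp$): summing over $W$ gives $\lambda'\qbin{n}{t}{q}$ by the constancy just established, while summing over $U$ gives $|\mD|\qbin{n-r}{t}{q}$; hence $\lambda'=|\mD|\qbin{n-r}{t}{q}\qbin{n}{t}{q}^{-1}=\lambda\qbin{n-r}{t}{q}\qbin{r}{t}{q}^{-1}$ using $|\mD|=\lambda_0=\lambda\qbin{n}{t}{q}\qbin{r}{t}{q}^{-1}$. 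Finally, a routine identity for Gaussian binomial coefficients shows that $\qbin{n-r}{t}{q}\qbin{r}{t}{q}^{-1}=\qbin{n-t}{r}{q}\qbin{n-t}{r-t}{q}^{-1}$, which is the asserted parameter.
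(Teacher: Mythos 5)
Your proof is correct. Note that the paper does not actually prove this lemma; it states it as a known result, citing Kiermaier--Pav\v{c}evi\'{c} and Suzuki, so there is no in-paper argument to compare against. Your write-up is a sound, self-contained version of the standard argument: reduce via $W\le U^\perp\Leftrightarrow U\le W^\perp$ to showing that $h(Y)=|\{U\in\mD: U\le Y\}|$ is constant over $(n-t)$-dimensional $Y$; introduce the intersection distribution $n_u=|\{U\in\mD:\dim(U\cap Y)=u\}|$; and derive, by double counting pairs $(Z,U)$ with $\dim Z=i$, $Z\le U$, $Z\cap Y=0$, the system $\sum_{u=r-t}^{r} n_u\,q^{iu}\qbin{r-u}{i}{q}=\lambda_i\,q^{i(n-t)}\qbin{t}{i}{q}$, which is triangular with diagonal entries $q^{(t-j)(r-t+j)}\neq 0$ and hence determines $n_r=h(Y)$ independently of $Y$. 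I checked the individual counts (the number of $i$-spaces meeting a fixed $m$-space trivially being $q^{im}\qbin{n-m}{i}{q}$, the values of $\lambda_i$, the final double count giving $\lambda'=\lambda\qbin{n-r}{t}{q}\qbin{r}{t}{q}^{-1}$, and the Gaussian-binomial identity converting this to $\lambda\qbin{n-t}{r}{q}\qbin{n-t}{r-t}{q}^{-1}$) and they are all right. Your parenthetical treatment of the degenerate case $r+t>n$ and your remark on why the na\"ive inclusion--exclusion from the set-design case does not transfer are both apt. This matches in spirit how the cited sources handle duality (via intersection numbers), so no substantive divergence to report.
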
 

The {\em intersection numbers} associated with a subspace design were given in \cite{suzuki}. These fundamental design invariants often play an important role in establishing the non-existence of a design for a given set of parameters.

\begin{lemma}
	Let $n,r,t,\lambda$ be positive integers and let $\mD$ be a $t$-$(n,r,\lambda)$ design over $\fq$. 
	Let $I,J$ be $i,j$ dimensional subspaces of $\fq^n$ satisfying $i+j \leq t$ and $I \cap J = \{0\}$.
	Then the number
	$$\lambda_{i,j}:=|\{ U \in \mD : I \subseteq U, \ J \cap U = \{0\} \}| $$
	depends only on $i$ and $j$, and is given by the formula
	$$ \lambda_{i,j} = q^{j(r-i)}\lambda \qbin{n-i-j}{r-i}{q}\qbin{n-t}{r-t}{q}^{-1} .$$
\end{lemma}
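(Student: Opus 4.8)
The plan is to prove the formula by induction on $j$, using a double-counting argument for the base case $j=0$ and an inclusion--exclusion recursion on $\dim J$ for the inductive step. I will adopt the usual convention $\qbin{N}{M}{q}=0$ for $M<0$ or $M>N$, so the claimed value is also correct in the degenerate ranges (for instance when $r+j>n$, where no block meets $J$ trivially). For the base case, $J=\{0\}$ and $\lambda_{i,0}$ is just the number $\lambda_i$ of blocks of $\mD$ containing a fixed $i$-dimensional subspace $I$ (note $i\le t$). Counting in two ways the pairs $(T,U)$ with $I\subseteq T\subseteq U$, $\dim T=t$, $U\in\mD$ gives $\lambda\,\qbin{n-i}{t-i}{q}=\lambda_i\,\qbin{r-i}{t-i}{q}$, since the number of $t$-dimensional subspaces containing $I$ inside $\fq^n$ (resp.\ inside a fixed block $U\supseteq I$) equals $\qbin{n-i}{t-i}{q}$ (resp.\ $\qbin{r-i}{t-i}{q}$). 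Hence $\lambda_i$ depends only on $i$, and a routine rewriting of Gaussian coefficients yields $\lambda\,\qbin{n-i}{t-i}{q}\qbin{r-i}{t-i}{q}^{-1}=\lambda\,\qbin{n-i}{r-i}{q}\qbin{n-t}{r-t}{q}^{-1}$, which is the asserted value at $j=0$.

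For the inductive step, fix $I,J$ as in the statement with $\dim J=j\ge 1$ and pick a hyperplane $J'\subset J$ (so $\dim J'=j-1$). By the induction hypothesis there are exactly $\lambda_{i,j-1}$ blocks $U\in\mD$ with $I\subseteq U$ and $J'\cap U=\{0\}$, and I will partition these according to $J\cap U$. Since $J'$ has codimension $1$ in $J$ and meets $U$ trivially, $J\cap U$ is either $\{0\}$ --- these are the $\lambda_{i,j}$ blocks to be counted --- or a line $\langle x\rangle\subseteq J$ with $\langle x\rangle\not\subseteq J'$, and there are $\qbin{j}{1}{q}-\qbin{j-1}{1}{q}=q^{j-1}$ such lines. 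For a fixed such line, the equality $J\cap U=\langle x\rangle$ already forces $J'\cap U=J'\cap\langle x\rangle=\{0\}$ and $x\in U$; moreover, fixing a complement $J''$ of $\langle x\rangle$ in $J$, one checks that (for blocks with $x\in U$) the condition $J\cap U=\langle x\rangle$ is equivalent to $J''\cap U=\{0\}$. Since $\widetilde I:=I+\langle x\rangle$ has dimension $i+1$, satisfies $\widetilde I\cap J''=\{0\}$ and $(i+1)+(j-1)\le t$, the number of such $U$ equals $\lambda_{i+1,j-1}$ by the induction hypothesis. This yields the recursion
\[
\lambda_{i,j-1}=\lambda_{i,j}+q^{j-1}\,\lambda_{i+1,j-1},
\]
which in particular shows that $\lambda_{i,j}$ is independent of the chosen $I$ and $J$.

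To close the computation, I solve the recursion for $\lambda_{i,j}$, substitute the induction hypothesis for $\lambda_{i,j-1}$ and $\lambda_{i+1,j-1}$, factor out $\lambda\,q^{(j-1)(r-i)}\,\qbin{n-t}{r-t}{q}^{-1}$, and am left with the bracket $\qbin{n-i-j+1}{r-i}{q}-\qbin{n-i-j}{r-i-1}{q}$. By the $q$-Pascal identity $\qbin{N}{M}{q}=q^{M}\qbin{N-1}{M}{q}+\qbin{N-1}{M-1}{q}$ applied with $N=n-i-j+1$ and $M=r-i$, this bracket collapses to $q^{r-i}\qbin{n-i-j}{r-i}{q}$. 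Collecting the powers of $q$ gives total exponent $(j-1)(r-i)+(r-i)=j(r-i)$, so $\lambda_{i,j}=q^{j(r-i)}\lambda\,\qbin{n-i-j}{r-i}{q}\qbin{n-t}{r-t}{q}^{-1}$, as claimed.

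The main obstacle is the inductive step, and inside it the two small reductions: verifying that, for a fixed line $\langle x\rangle\subseteq J\setminus J'$, the blocks $U\supseteq I$ with $J\cap U=\langle x\rangle$ are exactly the blocks $U\supseteq\widetilde I$ with $J''\cap U=\{0\}$ (this is where the complement $J''$ and the disjointness $\widetilde I\cap J''=\{0\}$ must be handled with care), and then steering the resulting two-term recursion to the closed form using the correct $q$-Pascal identity. Everything else is bookkeeping with Gaussian coefficients.
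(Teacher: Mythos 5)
Your proof is correct. Note that the paper itself gives no proof of this lemma --- it is quoted from Suzuki and from Kiermaier--Pav\v{c}evi\'{c} --- so there is nothing internal to compare against; your argument is essentially the standard one for these intersection numbers (a two-way count of flags $I\subseteq T\subseteq U$ for $j=0$, followed by the $q$-analogue of the Pascal-type recursion $\lambda_{i,j-1}=\lambda_{i,j}+q^{j-1}\lambda_{i+1,j-1}$ obtained by stratifying over $J\cap U$). All the delicate points check out: the base-case identity $\qbin{n-i}{t-i}{q}\qbin{r-i}{t-i}{q}^{-1}=\qbin{n-i}{r-i}{q}\qbin{n-t}{r-t}{q}^{-1}$ is the usual flag identity; the count $q^{j-1}$ of lines of $J$ outside the hyperplane $J'$ is right; the equivalence (for $x\in U$) of $J\cap U=\langle x\rangle$ with $J''\cap U=\{0\}$ holds because $J\cap U$ meets $J''$ trivially inside $J$ and hence has dimension at most one; $\widetilde I\cap J''=\{0\}$ follows from $I\cap J=\{0\}$; and the constraint $(i+1)+(j-1)\le t$ keeps the induction hypothesis applicable. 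The final collapse of $\qbin{n-i-j+1}{r-i}{q}-\qbin{n-i-j}{r-i-1}{q}$ to $q^{r-i}\qbin{n-i-j}{r-i}{q}$ uses the correct version of the $q$-Pascal identity, and the exponents of $q$ assemble to $j(r-i)$ as claimed. The only cosmetic remark is that the independence of $\lambda_{i,j}$ from the choice of $(I,J)$ is already delivered by the induction hypothesis (both terms on the right of the recursion are independent of the configuration), which you do point out; the convention $\qbin{N}{M}{q}=0$ for $M<0$ or $M>N$ indeed takes care of the degenerate ranges.
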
  

We list in Table \ref{taa} the known algebraic constructions of infinite families of 
{$t$-$(n,r,\lambda)$} subspace designs that do not rely on the existence of other subspace designs. 
This list is still rather short, although many specific numerical parameter sets are known to be realizable \cite{bkkljcta,braunkierwass,braunkierwass2}.
In most cases a computer search is combined with an assumed automorphism group, often in fact the normalizer of a Singer cycle group. One of the first such examples is the construction of a $3$-$(8,4,11)$ design over $\F_2$; see~\cite{braun05}.


The only known parameters of a $q$-Steiner system with $t \ge 2$ are given by 
$2$-$(13,3,1)$ over $\F_2$. Subspace designs for these parameters were found by an application of the Kramer-Mesner method and required significant computation \cite{braunetz}. The existence of the $q$-Fano plane, namely a $2$-$(7,3,1)$ design over $\fq$ is still an open problem. 

\medskip

{\small
	\begin{table}[h]
		\centering
		\begin{tabular}{|l|l|l|l|}
			\hline
			$t$-$(n,r,\lambda)$  & $\fq$ & Constraints & Refs               \\
			\hline \hline
			$2$-$(n,3,7)$ & $\F_2$ & $(n,6)=1, \ n \ge 7$&  \cite{thomas}\\
			\hline
			$2$-$\left(n,3,\qbin{3}{1}{q}\right)$ &$\fq$& $(n,6)=1, \ n \ge 7$ &  \cite{suzuki92}\\
			\hline
			$2$-$\left(n,r, {\displaystyle \frac{1}{2}}\qbin{n-2}{r-2}{q}\right)$ & $\F_3,\F_5$ &
			\begin{tabular}{ll}
				$n \ge 6$,
				$n \equiv 2 \mod 4$,\\
				$3 \le r \le n-3$,
				$r \equiv 3 \mod 4$
			\end{tabular} &  \cite{bkkljcta}\\
			\hline
		\end{tabular}
		\caption{Known primary constructions of infinite families of subspace designs.}
		\label{taa}
	\end{table}
}

Other realizable parameter sets can be obtained on condition of the existence of other subspace designs. 
In \cite{Itoh}, Itoh establishes the following construction.
\begin{theorem}
	Let $n \equiv 5 \mod 6(q-1)$. Suppose there exists a 
	$$\mbox{2-$\left(n,3, q^3\qbin{n-5}{1}{q}\right)$}$$ design over $\F_q$ that 
	that is invariant under the action of a Singer cycle of $\GL(n,q)$. Then for each $r \geq 3$, there exists a 
	$$\mbox{$2$-$\left(rn,3,q^3\qbin{n-5}{1}{q}\right)$}$$ design over $\F_q$ that is invariant under the action of $\SL(r,q^n)$.
\end{theorem}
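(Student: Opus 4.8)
The proof is a geometric construction, carried out by passing to the extension field. The plan is to identify $\F_q^{rn}$ with $\F_{q^n}^r$, so that $G:=\SL(r,q^n)$ acts on it by $\F_q$-linear maps, and to build the required design as a union of $G$-orbits of $3$-dimensional $\F_q$-subspaces.

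First I would set up the geometry. Under the identification $\F_q^{rn}\cong\F_{q^n}^r$, the $1$-dimensional $\F_{q^n}$-subspaces form a \emph{Desarguesian spread} $\mathcal{S}$: a partition of the nonzero vectors into $n$-dimensional $\F_q$-subspaces meeting pairwise trivially. The group $G$ permutes $\mathcal{S}$ transitively, and any element of $G$ stabilising a component $S_0\cong\F_q^n$ acts on $S_0$ through the full multiplicative group $\F_{q^n}^*$, that is, through a Singer cycle of $\GL(n,q)$; the hypothesis $r\ge 3$ is what guarantees this and the transitivity statements below, by providing room to normalise determinants. A $2$-dimensional $\F_q$-subspace of $\F_{q^n}^r$ is either contained in a (unique) component of $\mathcal{S}$ — call it \emph{confined} — or meets every component in dimension at most $1$ — call it \emph{scattered}; $G$ is transitive on scattered $2$-subspaces, and each of these spans a $2$-dimensional $\F_{q^n}$-subspace.

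Next I would assemble the blocks in two pieces, writing $\lambda_0:=q^3\qbin{n-5}{1}{q}$. On each component $S\in\mathcal{S}$ place a copy $\mathcal{D}_S$ of the hypothesised $2$-$(n,3,\lambda_0)$ design; because $\mathcal{D}_0$ is Singer-invariant this copy is unambiguous (two elements of $G$ carrying $S_0$ to $S$ differ by an element stabilising $S_0$, which acts on $S_0$ through the Singer cycle), and $\bigcup_{S\in\mathcal{S}}\mathcal{D}_S$ is $G$-invariant since $G$ is transitive on $\mathcal{S}$. To the union of these add a $G$-invariant family $\mathcal{E}$ of \emph{scattered} $3$-dimensional $\F_q$-subspaces; it turns out that $\mathcal{E}$ must be taken among subspaces spanning only a $2$-dimensional $\F_{q^n}$-subspace, as otherwise the covering number of a scattered $2$-subspace would depend on $r$. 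Set $\mathcal{D}:=\bigcup_{S}\mathcal{D}_S\cup\mathcal{E}$. Since a scattered $3$-subspace contains no confined $2$-subspace, the two pieces do not interfere: the blocks of $\mathcal{D}$ through a confined $2$-subspace $P\subseteq S$ are precisely the blocks of $\mathcal{D}_S$ through $P$, of which there are exactly $\lambda_0$ (using only that $\mathcal{D}_0$ is a design), while a scattered $2$-subspace lies in no $\mathcal{D}_S$ and, by transitivity of $G$, in a number of blocks of $\mathcal{E}$ that depends only on $\mathcal{E}$.

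The remaining step — and the genuine obstacle — is to choose $\mathcal{E}$ so that this common number equals $\lambda_0$. This reduces to a computation inside a single $2$-dimensional $\F_{q^n}$-subspace $U$: a scattered $2$-subspace $P$ spans such a $U$, and the blocks of $\mathcal{E}$ through $P$ are exactly the selected scattered $3$-subspaces of $U\cong\F_{q^n}^2$ containing $P$ — a quantity involving only $q$ and $n$. I would classify the $\GL(2,q^n)$-orbits of scattered $3$-subspaces of $\F_{q^n}^2$, determine — via identities between Gaussian binomial coefficients, such as \eqref{eq:minors} — how many members of each orbit pass through a fixed scattered $2$-subspace, and take $\mathcal{E}$ to be the union of those orbits whose contributions sum to $\lambda_0$. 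The congruence $n\equiv 5\mod 6(q-1)$ is precisely the arithmetic condition ensuring that the relevant $q$-binomial expression is integral and equals $q^3\qbin{n-5}{1}{q}$; establishing this identity, together with the orbit classification, is the technical heart of the argument. Once $\mathcal{E}$ is fixed, the design property of $\mathcal{D}$ and its invariance under $\SL(r,q^n)$ follow from the bookkeeping above.
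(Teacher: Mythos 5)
First, a caveat: the paper does not prove this statement at all --- it is quoted from Itoh \cite{Itoh} as a known result --- so there is no in-paper proof to compare yours against. Judged on its own terms, your outline correctly reproduces the architecture of Itoh's construction: identify $\F_q^{rn}$ with $\F_{q^n}^r$, use the Desarguesian spread of $1$-dimensional $\F_{q^n}$-subspaces, plant a well-defined copy of the hypothesised Singer-invariant design on each spread component (this is exactly where the Singer-invariance hypothesis is consumed, and your argument that the stabiliser of a component in $\SL(r,q^n)$ induces the Singer cycle on it, with $r\ge 3$ giving room to normalise determinants, is correct), and observe that confined and scattered $2$-subspaces are covered by disjoint families of blocks, so the two counts decouple. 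That bookkeeping is sound, as is the remark that the scattered blocks must span only $2$-dimensional $\F_{q^n}$-subspaces if the count is to be independent of $r$.

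The genuine gap is that the decisive step is only named, never carried out. You must (i) classify the orbits, under the stabiliser in $\SL(r,q^n)$ of a $2$-dimensional $\F_{q^n}$-subspace $U$, of the scattered $3$-dimensional $\F_q$-subspaces of $U$; (ii) compute, for each orbit, how many of its members contain a fixed scattered $2$-subspace of $U$; and (iii) exhibit a union of orbits for which these numbers sum to exactly $q^3\qbin{n-5}{1}{q}$. Without (i)--(iii) the family $\mathcal{E}$ is not shown to exist, and this is precisely the nontrivial content of the theorem: $q^3\qbin{n-5}{1}{q}=\qbin{n-2}{1}{q}-\qbin{3}{1}{q}$ is a very particular value, and there is no a priori reason that a $G$-invariant selection achieving it exists. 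Relatedly, your gloss on the hypothesis $n\equiv 5 \bmod 6(q-1)$ --- that it makes ``the relevant $q$-binomial expression integral'' --- cannot be the explanation, since $q^3\qbin{n-5}{1}{q}$ is an integer for every $n\ge 5$; the congruence must instead guarantee that $\lambda_0$ is realisable as the specific nonnegative integer combination of orbit contributions arising in (ii), and identifying that role is part of the computation you have omitted. Your appeal to the determinant identity (\ref{eq:minors}) here is also misplaced: that identity concerns minors of the $q$-Pascal matrix in the MacWilliams argument and has no bearing on this orbit count.
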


As an example, one of Suzuki's $\mbox{$2$-$\left(n,3,\qbin{3}{1}{q}\right)$}$ designs over $\F_q$ yields a\\ $\mbox{$2$-$\left(n,3,q^3\qbin{n-5}{1}{q}\right)$}$ subspace design, whose $\qbin{n}{3}{q}-\qbin{3}{1}{q}$ blocks form
the complement of the set of blocks of the original design. Moreover, both of these are invariant under the action of a Singer cycle in $\GL(n,q)$.
Hence the infinite family of designs constructed in \cite{suzuki92} yields a new infinite family via Itoh's construction. 
Some further infinite families of $2$-$(rn,3,\lambda)$ designs over $\F_q$ have been constructed by Itoh's result, based on sporadic examples of $2$-$(n,3,\lambda)$ Singer cycle invariant subspace designs found by computer search; see \cite[Table 5]{braunkierwass}. 

Finally, in \cite{bkkljcta}, the authors show how to construct new {\em large sets} of designs from existing ones. An $LS_q[N](t,r,n)$ large set is a partition of the set of $\qbin{n}{r}{q}$ $r$-dimensional subspaces of $\F_q^n$
into $N$ disjoint $t$-$\left(n,r,\qbin{n-t}{r-t}{q}N^{-1}\right)$ designs. It is known, for example, that $LS_q[N](2,r,n)$ large sets exist for $q=3,5$, $n\geq 6$ such that $n\equiv 2 \mod 4, $ and $3\leq r \leq n-3$ satisfying $r \equiv 3 \mod 4$. In \cite[Table 7]{braunkierwass} the authors list 5 large set parameters that are known to be {\em realizable}, and many more that are {\em admissable} but for which existence is not yet established.
The interested reader is referred to \cite{bkkljcta} for further details.

\section{An Assmus-Mattson Theorem for the Rank Metric}

The main result of this section is Theorem \ref{th:AM}, which gives criteria under which the $w$-supports of an $\fq$-$[n \times m,k,d]$
form a design over $\fq$. This is a rank-metric analogue of the Assmus-Mattson theorem, connecting codes and subspace designs. 

Throughout this section, if $\mC$ is an $\fq$-$[n \times m, k,d]$ code and $U \le \F_q^n$ is a subspace, then we let
$$\mC_{\ge}(U,d):= \{X \in \mC : \rk (X) = d, \ \sigma(X) \ge U\} \subseteq \mC.$$
We also define 
$$\pi_{t}(\mC)_{\ell}:=\{X \in \pi_t(\mC) \ : \  \rk(X)=\ell\} \qquad \text{and} \qquad \Pi(\mC,A,t)_{\ell} := \pi_t(A\mC)_\ell.$$

We start with a series of preliminary results. In the sequel, we let $e_i$ denote the unit or standard basis vectors.

\begin{lemma} \label{lem:canspaces}
	Let $\mC$ be an $\fq$-$[n \times m, k,d]$ code. Let
	$0 \le t  <d$ be an integer, and let $E:= \langle e_i \ : \  1 \le i \le t \rangle \le \F_q^n$.
	There is a one-to-one correspondence
	between the words of rank $d$ in $\mC$ whose supports contain $E$ and the elements of the punctured code $\pi_t(\mC)$ of rank $d-t$. More precisely, the restriction of $\pi_t$ to 
	$\mC_{\ge}(E,d)$ is a bijection:
	$$\pi_t:\mC_{\ge}(E,d) \to \pi_{t}(\mC)_{d-t}.$$
\end{lemma}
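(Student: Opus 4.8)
The plan is to reduce the statement to a single linear-algebra observation about how the support (column space) of a matrix behaves when rows are deleted, and then to use the hypothesis $t<d$ together with the minimum-distance property to pin down the ranks.

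\textbf{Key observation.} Since $\pi_t$ acts on a matrix by deleting rows, it acts columnwise: writing $\bar\pi_t\colon\F_q^n\to\F_q^{n-t}$ for the linear map that deletes the first $t$ coordinates of a vector, the columns of $\pi_t(X)$ are precisely the images under $\bar\pi_t$ of the columns of $X$. Hence the column space of $\pi_t(X)$ equals $\bar\pi_t(\sigma(X))$, and since $\ker\bar\pi_t=\langle e_1,\dots,e_t\rangle=E$, the rank--nullity theorem gives, for every $X\in\F_q^{n\times m}$,
$$
\rk(\pi_t(X)) \;=\; \dim\sigma(X)-\dim\bigl(\sigma(X)\cap E\bigr).
$$
This identity is the engine of the whole argument.

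\textbf{The three steps.} First I would check that $\pi_t$ maps $\mC_{\ge}(E,d)$ into $\pi_t(\mC)_{d-t}$: if $X\in\mC_{\ge}(E,d)$ then $\sigma(X)\supseteq E$, so $\sigma(X)\cap E=E$ and the displayed formula gives $\rk(\pi_t(X))=d-t$. Second, injectivity: if $Z\in\mC$ satisfies $\pi_t(Z)=0$ then $Z$ is supported on its first $t$ rows, so $\rk(Z)\le t<d$, and the minimum-distance property forces $Z=0$; thus $\pi_t$ is injective on all of $\mC$, in particular on $\mC_{\ge}(E,d)$. Third, surjectivity: given $Y\in\pi_t(\mC)_{d-t}$, pick any $X_0\in\mC$ with $\pi_t(X_0)=Y$; since $\rk(Y)=d-t\ge 1$ we have $X_0\ne 0$. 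Setting $a:=\dim(\sigma(X_0)\cap E)\le t$, the displayed formula gives $\rk(X_0)=(d-t)+a\le d$, while minimality of $d$ forces $\rk(X_0)\ge d$ because $X_0\ne 0$. Therefore $a=t$, whence $\rk(X_0)=d$ and $\sigma(X_0)\cap E=E$, i.e.\ $\sigma(X_0)\supseteq E$; so $X_0\in\mC_{\ge}(E,d)$ and $\pi_t(X_0)=Y$.

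The step I expect to be the main (and only) subtlety is the surjectivity argument: the non-obvious point is that an arbitrary preimage in $\mC$ of a rank-$(d-t)$ element of $\pi_t(\mC)$ is \emph{automatically} forced both to have rank exactly $d$ and to have support containing $E$. Both conclusions drop out of combining the rank formula with the minimum-distance bound, and it is precisely here (as well as in injectivity) that the hypothesis $t<d$ is essential.
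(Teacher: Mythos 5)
Your proof is correct, and it takes a cleaner route than the paper's on the one genuinely delicate point. The paper establishes the forward direction by factoring $X=\begin{pmatrix} I_t & 0\\ 0 & Z\end{pmatrix}G$ with $G\in\GL_m(\F_q)$, and proves the support-containment half of surjectivity by contradiction, augmenting $X$ to the $n\times(m+t)$ matrix $\overline{X}=\begin{pmatrix} X & e_1^\top & \cdots & e_t^\top\end{pmatrix}$ and comparing $\rk(\pi_t(\overline{X}))$ computed two ways. Your exact identity $\rk(\pi_t(X))=\rk(X)-\dim(\sigma(X)\cap E)$, obtained by applying rank--nullity to the coordinate-deletion map restricted to $\sigma(X)$, subsumes both the paper's inequality $\rk(\pi_t(X))\ge\rk(X)-t$ and its contradiction argument: once an arbitrary preimage $X_0$ of a rank-$(d-t)$ word is squeezed to have $\rk(X_0)=d$ by the minimum distance, the same identity forces $\dim(\sigma(X_0)\cap E)=t$, so $E\le\sigma(X_0)$ drops out with no separate case analysis. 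What the identity buys is precisely this: rank and support containment are controlled by a single equation rather than two independent estimates. The paper's version is marginally more self-contained in that it never isolates the columnwise description of $\pi_t$, but your argument is the one I would keep.
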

\begin{proof}
	First observe that $\pi_t$ is an $\fq$-linear homomorphism of $\mC$ onto $\pi_t(\mC)$ and moreover is an injection, since for any non-zero $X \in \mC$,
	the rank of $\pi_t(X)$ is at least $d-t >0$.	
	Let $X \in \mC_{\ge}(E,d)$. Since $E \le \sigma(X)$, there exists an invertible matrix $G \in \GL_m(\F_q)$ such that
	\begin{equation*} \label{dec1}
	X= \begin{pmatrix} I_t & 0 \\ 0 & Z \end{pmatrix}  G,
	\end{equation*}
	for some $(n-t) \times (m-t)$ matrix $Z$ of rank $d-t$. Therefore, $\pi_t(X)= \begin{pmatrix} 0& Z \end{pmatrix} G$
	and $d=\rk(X)=t+\rk(Z)$.
	In particular, $$\rk(\pi_t(X)) =\rk(Z)=d-t.$$ 
	This shows that $\pi_t$ is a well-defined, injective map from $\mC_{\ge}(E,d)$ to
	$$\pi_{t}(\mC)_{d-t} =\{X \in \pi_t(\mC) \ : \  \rk(X)=d-t\} .$$
	
	Let $Y \in \pi_t(\mC)$ such that $\rk(Y)=d-t$. By definition, there exists $X \in \mC$ such that $\pi_t(X)=Y$. 
	Since $d-t>0$, we have $X \neq 0$. Therefore
	$$d \le \rk(X) \le \rk(\pi_t(X))+t=\rk(Y)+t=(d-t)+t=d.$$
	It follows that $\rk(X)=d$.
	We will show that $E \le \sigma(X)$. Towards a contradiction, suppose that 
	$\sigma(X)$ does not contain $E$. Define the $n \times (m+t)$ matrix
	$$\overline{X}:= \begin{pmatrix} X & e_1^\top & \cdots & e_t^\top\end{pmatrix}.$$
	We then have $\rk(\overline{X}) \ge \rk(X)+1$, as the span of
	$e_1^\top,...,e_t^\top$ is not contained in the span of the columns of $X$. Therefore 
	\begin{equation*} \label{contr}
	\rk(\pi_t(\overline{X})) \ge \rk(\overline{X})-t \ge \rk(X)+1-t=d+1-t.
	\end{equation*}
	On the other hand, by construction of $\overline{X}$ we have
	$$\rk(\pi_t(\overline{X})) = \rk(\pi_t(X))=\rk(Y)=d-t,$$
	yielding a contradiction. We deduce that $\pi_t$ is a bijection onto $\pi_t(\mC)_{d-t}$. 
\end{proof}

\begin{corollary} \label{pr:1}
	Let $\mC$ be an $\fq$-$[n \times m, k,d]$ code. Let
	$0 \le t  <d$ be an integer, and let $T\le \F_q^n$ be a subspace of dimension $t$. There exists
	$A \in \GL_n(\F_q)$ such that the
	words of weight $d$ in $\mC$ whose support contains $T$ are in one-to-one correspondence with the words of weight $d-t$ in $\Pi(\mC,A,t)$.
\end{corollary}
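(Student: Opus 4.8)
The plan is to reduce the general statement to Lemma~\ref{lem:canspaces} by a change of coordinates on the row space $\F_q^n$. Fix a basis $v_1,\dots,v_t$ of $T$ and extend it to a basis $v_1,\dots,v_n$ of $\F_q^n$. First I would let $A \in \GL_n(\F_q)$ be the (unique) invertible matrix with $A v_i^\top = e_i^\top$ for $1 \le i \le n$; then $A$ maps $T$ isomorphically onto $E := \langle e_1,\dots,e_t\rangle$, i.e.\ $AT = E$. The existence of such an $A$ is elementary linear algebra, so this step presents no difficulty.

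Next I would record the two ways in which left multiplication by $A$ interacts with the relevant invariants: for every $X \in \F_q^{n\times m}$ we have $\rk(AX) = \rk(X)$ (as $A$ is invertible) and $\sigma(AX) = A\,\sigma(X)$ (the column space is transported by $A$). Consequently the $\fq$-linear bijection $\mC \to A\mC$, $X \mapsto AX$, preserves rank, so $A\mC$ is again an $\fq$-$[n\times m,k,d]$ code, and since $AT = E$ it restricts to a bijection
$$\mC_{\ge}(T,d) \;=\; \{X \in \mC : \rk(X) = d,\ \sigma(X) \ge T\} \;\longrightarrow\; (A\mC)_{\ge}(E,d),$$
because $\sigma(X) \ge T \iff A\,\sigma(X) \ge AT \iff \sigma(AX) \ge E$.

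Finally I would invoke Lemma~\ref{lem:canspaces} for the code $A\mC$ (legitimate since $0 \le t < d = d_{\rk}(A\mC)$): the restriction of $\pi_t$ to $(A\mC)_{\ge}(E,d)$ is a bijection onto $\pi_t(A\mC)_{d-t} = \Pi(\mC,A,t)_{d-t}$, the set of rank-$(d-t)$ words of $\Pi(\mC,A,t)$. Composing the two bijections, the map $X \mapsto \pi_t(AX)$ is the desired one-to-one correspondence between the weight-$d$ words of $\mC$ whose support contains $T$ and the weight-$(d-t)$ words of $\Pi(\mC,A,t)$. I do not anticipate a genuine obstacle: the only points needing a line of justification are the identities $\rk(AX)=\rk(X)$ and $\sigma(AX) = A\,\sigma(X)$ and the choice of $A$ with $AT = E$, all of which are routine; the substance of the argument is entirely contained in Lemma~\ref{lem:canspaces}.
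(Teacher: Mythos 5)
Your proof is correct and follows essentially the same route as the paper: choose $A\in\GL_n(\F_q)$ carrying $T$ onto $E=\langle e_1,\dots,e_t\rangle$, note that $X\mapsto AX$ is a rank-preserving bijection $\mC_{\ge}(T,d)\to(A\mC)_{\ge}(E,d)$ since $\sigma(AX)=A\,\sigma(X)$, and then apply Lemma~\ref{lem:canspaces} to $A\mC$. Your write-up just makes explicit the routine verifications the paper leaves implicit.
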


\begin{proof}
	Let $f:\F_q^n \to \F_q^n$ be an $\F_q$-isomorphism with the property that $f(T)=E= \langle e_i \ : \  1 \le i \le t \rangle$. Let $A \in \GL_n(\F_q)$ be a representation of $f$. Define $\overline{\mC}:=A\mC$.
	Multiplication by $A$ from the left gives a bijection from $\mC_{\ge}(T,d)$ onto $\overline{\mC}_{\ge}(E,d)$. 
	The result now follows since, 
	by Lemma \ref{lem:canspaces}, we have 
	$$|\overline{\mC}_{\ge}(E,d)|=| \pi_t(\overline{\mC})_{d-t}|=W_{d-t}(\pi_t(A\mC))=W_{d-t}(\Pi(\mC,A,t)).$$
	We have shown that for an arbitrary $t$-dimensional space $T$, there is a bijection between $\mC_\geq(T,d)$ and $\Pi(\mC,A,t)_{d-t}$, where $A$ is an invertible matrix mapping a basis of~$T$ to $\{e_1,...,e_t\}$.
\end{proof}

\begin{proposition}\label{prop:punwd}
	Let $\mC$ be an $\F_{q}$-$[n\times m,k,d]$. Let $t <d$ and let $W_i(\mC^*)$ be non-zero for at most $d-t$ values of $i$ in $\{1,...,n-t\}$.
	Let $A$ be an invertible matrix in $\fq^{n \times n}$.
	Then the weight distribution of $\Pi(\mC,A,t)$ is determined and is independent of~$A$.
\end{proposition}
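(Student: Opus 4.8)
The plan is to use the rank-metric MacWilliams identities (Theorem~\ref{th:macwids}) applied to the punctured code $\Pi(\mC,A,t)$ together with the duality Lemma~\ref{lem:duality}, and to exploit the hypothesis that $\mC^*$ has few non-zero weights in order to invert a triangular system. First I would observe that, by Lemma~\ref{lem:duality}, $\Pi(\mC,A,t)^* = \Sigma(\mC^*,(A^\top)^{-1},t)$, which is a \emph{subcode} of $\pi_t((A^\top)^{-1}\mC^*)$; in particular every non-zero weight of $\Pi(\mC,A,t)^*$ is a non-zero weight of $\pi_t((A^\top)^{-1}\mC^*)$. Since puncturing by $t$ coordinates can only decrease the rank of a codeword by at most $t$ and a non-zero codeword of $\mC^*$ of weight $i$ punctures to a word of weight in $\{\max(0,i-t),\dots,i\}$, the non-zero weights of $\pi_t((A^\top)^{-1}\mC^*)$ (and hence of $\Pi(\mC,A,t)^*$) all lie in $\{1,\dots,n-t\}$ and there are at most $d-t$ of them, because the at most $d-t$ non-zero weights of $\mC^*$ in $\{1,\dots,n-t\}$ are the only sources. (One must also note that $\Pi(\mC,A,t)$ lives in $\fq^{(n-t)\times m}$, so its MacWilliams identities are indexed by $\ell\in\{0,\dots,n-t\}$.)

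Next I would set up the linear system. Write $D:=\Pi(\mC,A,t)$, an $\fq$-$[(n-t)\times m, k', d']$ code for appropriate $k',d'$ (note $d'\geq d-t$ by Lemma~\ref{lem:canspaces} applied after a change of basis, or simply because puncturing $t$ rows drops rank by at most $t$). By Theorem~\ref{th:macwids} applied to $D$, for each $\ell\in\{0,\dots,n-t\}$,
\begin{equation*}
\sum_{i=0}^{n-t-\ell} W_i(D^*)\qbin{n-t-i}{\ell}{q} = q^{m(n-t-\ell)k'}\sum_{i=0}^{\ell} W_i(D)\qbin{n-t-i}{\ell-i}{q}.
\end{equation*}
The unknowns on the right are $W_0(D),\dots,W_{n-t}(D)$; but $W_0(D)=1$ is known, so there are at most $n-t$ genuine unknowns $W_1(D),\dots,W_{n-t}(D)$. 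The left-hand side is the product of the \emph{known} weight distribution of $D^*$ with a $q$-Pascal matrix; here ``known'' means: $W_0(D^*)=1$, and the remaining nonzero entries of $(W_i(D^*))$ are supported on a set $S\subseteq\{1,\dots,n-t\}$ of size at most $d-t$ — but we do not yet know the values on $S$. The point is that we can take $n-t$ equations, namely $\ell\in\{1,\dots,n-t\}$ (or a suitable subset of $d-t$ of them for the $D^*$-unknowns plus more for the $D$-unknowns), and argue that the whole system has a unique solution. Concretely: the left side, as $\ell$ ranges over $\{1,\dots,d-t\}$, is a square linear system in the at-most-$(d-t)$ unknowns $\{W_i(D^*):i\in S\}$ whose coefficient matrix is a submatrix of $M=(\qbin{n-t-i}{\ell}{q})$; by the minor formula~\eqref{eq:minors} (with $r_j = n-t-i_j$ for the distinct indices $i_j\in S$), this submatrix is invertible, so the $W_i(D^*)$ are determined once the right-hand sides are — but the right-hand sides for $\ell\le d-t\le d'$ involve only $W_0(D)=1$ (since $W_i(D)=0$ for $1\le i<d'$), hence are explicit. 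Having pinned down all of $W(D^*)$, the full set of $n-t$ equations $\ell\in\{1,\dots,n-t\}$ then has a triangular (in fact the matrix $(\qbin{n-t-i}{\ell-i}{q})_{0\le i,\ell\le n-t}$ is unitriangular) invertible coefficient matrix in the $W_i(D)$, so $W(D)$ is uniquely determined, and every quantity that entered is a function of $n,m,k,d,q,t$ only — not of $A$.

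The main obstacle I anticipate is the bookkeeping around \emph{which} weights of $D^*$ can be non-zero and the count ``at most $d-t$'': one has to be careful that puncturing $\mC^*$ does not create new non-zero weights outside $\{1,\dots,n-t\}$ or inflate the count, and that $\Sigma(\mC^*,(A^\top)^{-1},t)\subseteq\pi_t((A^\top)^{-1}\mC^*)$ really does inherit the weight-support bound (it does, being a subset). A secondary subtlety is ensuring the relevant minors of the $q$-Pascal matrix are non-zero, which is exactly what~\eqref{eq:minors} and the remark following it provide, since the indices $i_j$ are distinct. Once these two points are in hand, the rest is linear algebra over $\Q$ with rational (indeed integer) solutions, and independence of $A$ is automatic because $A$ never appears in the coefficients or the constant terms of the final system.
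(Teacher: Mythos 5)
Your overall strategy --- identify $\Pi(\mC,A,t)^*$ with a shortening of $\mC^*$ via Lemma~\ref{lem:duality}, argue that it has at most $d-t$ non-zero weights, and then invert the MacWilliams system using the non-vanishing of the minors in (\ref{eq:minors}) --- is the same as the paper's. However, the step where you bound the non-zero weights of $\Pi(\mC,A,t)^*$ has a genuine gap. You embed $\Sigma(\mC^*,(A^\top)^{-1},t)$ into the \emph{punctured} dual code $\pi_t((A^\top)^{-1}\mC^*)$ and assert that the latter has at most $d-t$ non-zero weights because ``the non-zero weights of $\mC^*$ in $\{1,\dots,n-t\}$ are the only sources.'' That is false: a codeword of rank $i$ can puncture to any rank in $\{\max(0,i-t),\dots,i\}$, so the punctured dual can acquire many non-zero weights that $\mC^*$ does not have, and codewords of $\mC^*$ of rank greater than $n-t$ also contribute. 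Nothing bounds the number of non-zero weights of $\pi_t((A^\top)^{-1}\mC^*)$ by $d-t$. The observation you need --- and the one the paper uses --- is that the shortened code behaves better than the punctured one: writing $B=(A^\top)^{-1}$, the code $\Sigma(\mC^*,B,t)$ consists of $\pi_t(BX)$ for exactly those $X\in\mC^*$ whose matrix $BX$ has its first $t$ rows zero, and deleting zero rows preserves rank. Hence $X\mapsto\pi_t(BX)$ is a rank-preserving bijection from $\{X\in\mC^*\ :\ \sigma(BX)\le\langle e_{t+1},\dots,e_n\rangle\}$ onto $\Sigma(\mC^*,B,t)$, so every non-zero weight of $\Pi(\mC,A,t)^*$ is a non-zero weight of $\mC^*$ that is at most $n-t$. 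This places all of them in a single fixed set $\{i_1,\dots,i_r\}$, $r\le d-t$, that does not depend on $A$ --- which is also what makes the final ``independent of $A$'' conclusion work: a bound of the form ``at most $d-t$ weights, but possibly a different set for each $A$'' would only give determinacy of each weight distribution separately, not their equality.

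A secondary, easily repaired slip: you propose solving for the unknowns $W_{i_j}(D^*)$ using the equations indexed by $\ell\in\{1,\dots,d-t\}$ and claim their right-hand sides involve only $W_0(D)=1$. For $\ell=d-t$ the term $W_{d-t}(D)$ appears, and since the minimum distance of $D=\Pi(\mC,A,t)$ is only guaranteed to be at least $d-t$, this term need not vanish. Use $\ell\in\{0,\dots,d-t-1\}$ instead, as the paper does; this still yields $d-t$ equations in at most $d-t$ unknowns, with an invertible coefficient submatrix by (\ref{eq:minors}).
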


\begin{proof}
	Let $\{i_1,...,i_r\}$ be the set of non-zero weights of $\mC^*$  in $\{1,...,n-t\}$, for some $r \leq d-t$.	
	Let $E=\langle e_{t+1},...,e_{n} \rangle$.
	The non-zero weights in $\{1,...,n-t\}$ of 
	$\mC^*(E) \subseteq \mC^*$ 
	form a subset of $\{i_1,...,i_r\}$ and since every element of $\mC^*(E)$ has rank at most $n-t=\dim E$, all of the weights of $\mC^*(E)$ lie in $\{i_1,...,i_r\}$.
	Moreover, since the first $t$ rows of each element of $\mC^*(E)$ are all zero, the map  
	$$\Phi:\mC^*(E) \longrightarrow \Sigma(\mC^*,I,t) \ : \  X \mapsto \pi_t(X)$$ 
	is a rank preserving bijection. As a consequence, the 
	non-zero weights of the code $\Sigma(\mC^*,I,t) = \Pi(\mC,I,t)^*$ form a subset of $\{i_1,...,i_r\}$. Clearly, the non-zero weights of $\mC^*$ are precisely the non-zero weights of $(A\mC)^*$. Therefore, we may apply the same argument with
	$A\mC$ in the place of $\mC$ to conclude that the non-zero weights of $$\Pi(\mC,A,t)^*=\Sigma(\mC^*,(A^\top)^{-1},t)=\Sigma((A\mC)^*,I,t) $$ form a subset of $\{i_1,...,i_r\}$, the weights of $\mC^*$ in $\{1,...,n-t\}$.
	
	Now observe that $W_i(\Pi(\mC,A,t))=0$ for $i=1,...,d-t-1$, since the minimum distance of $\Pi(\mC,A,t)$ is at least $d-t$. 
	In particular, at least $d-t$ of the values of the weight distribution of $\Pi(\mC,A,t)$ are known.

	The MacWilliams duality theorem for rank metric codes \cite{del,alb1} then yields a system of $d-t$ equations in at most $d-t$ unknowns.
	In fact, using Theorem \ref{th:macwids}, for $\ell = 0,...,d-t-1$ these equations are explicitly given by
	\begin{eqnarray*}
		\sum_{i=0}^{n-\ell} W_{i}(\Pi(\mC,A,t)^*)\qbin{n-i}{\ell}{q} & = & q^{m(n-\ell)-k}
		\sum_{i=0}^{\ell} W_{i}(\Pi(\mC,A,t))\qbin{n-i}{\ell-i}{q},
	\end{eqnarray*}
	which is equivalent to the system,
	\begin{eqnarray*}
		\sum_{j=1}^{r} W_{i_j}(\Pi(\mC,A,t)^*)\qbin{n-i_j}{\ell}{q} & = & 
		(q^{m(n-\ell)-k}-1)\qbin{n}{\ell}{q} .
	\end{eqnarray*} 
	From (\ref{eq:minors}), this system can be solved
	for unique $W_{i_j}(\Pi(\mC,A,t)^*), 1 \leq j \leq r$. Furthermore, this solution is independent of $A$, so 
	the weight enumerator of $\Pi(\mC,A,t)^*$, and hence $\Pi(\mC,A,t)$, is independent of $A$. 		
\end{proof}

We require one more definition before proving the main result of this section.

\begin{definition} \label{def:u-inv}
	Let $\mC$ be an $\F_{q}$-$[n\times m,k,d]$ code and let $w$ be an integer satisfying $d \leq u \leq n$. 
	We say that $\mC$ is \textbf{$u$-invariant} if $|\mC_=(U)|$ depends only on $u$ for each $u$-support $U$.
	If $\mC$ is $u$-invariant we define $\mu(\mC,u):=|\mC_=(U)|$, where $U$ is any $u$-support.
\end{definition}

\begin{lemma}\label{lem:inv}
	Let $\mC$ be an $\F_{q}$-$[n\times m,k,d]$ $w$-invariant code for some $d\le w \le n$. Let
	$A \in \GL_n(\fq)$. Then $A\mC$ is also $w$-invariant and
	$\mu(\mC,w) = \mu(A\mC,w)$.
\end{lemma}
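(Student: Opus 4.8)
The plan is to transport the $w$-invariance of $\mC$ to $A\mC$ by tracking how left multiplication by $A$ acts on supports. The first observation is that if $X \in \fq^{n\times m}$ and $A \in \GL_n(\fq)$, then $\sigma(AX) = A\,\sigma(X)$: the column space of $AX$ is the image under the invertible linear map $A$ of the column space of $X$. In particular $A$ induces a bijection on the lattice of subspaces of $\fq^n$ that preserves dimension. Consequently, for any subspace $U \le \fq^n$, left multiplication by $A$ restricts to a bijection
$$
\mC_=(U) \;\longrightarrow\; (A\mC)_=(AU), \qquad X \mapsto AX,
$$
with inverse $Y \mapsto A^{-1}Y$ (note $A^{-1}\mC' \cap \{\sigma = U\}$ lands back in $\mC_=(U)$ since $\sigma(A^{-1}Y) = A^{-1}\sigma(Y) = A^{-1}(AU) = U$). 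Hence $|(A\mC)_=(AU)| = |\mC_=(U)|$ for every $U$.

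Next I would use this to show $A\mC$ is $w$-invariant. Let $V$ be any $w$-support of $A\mC$, i.e. $(A\mC)_=(V) \neq \emptyset$ and $\dim V = w$. Write $U := A^{-1}V$, so $\dim U = w$ and, by the displayed bijection applied with $U$, $\mC_=(U) = A^{-1}\big((A\mC)_=(V)\big) \neq \emptyset$; thus $U$ is a $w$-support of $\mC$. Since $\mC$ is $w$-invariant, $|\mC_=(U)| = \mu(\mC,w)$ does not depend on the choice of $U$. Combining with $|(A\mC)_=(V)| = |\mC_=(U)|$, we get $|(A\mC)_=(V)| = \mu(\mC,w)$ for every $w$-support $V$ of $A\mC$. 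This simultaneously shows that $|(A\mC)_=(V)|$ depends only on $w$ — so $A\mC$ is $w$-invariant — and that $\mu(A\mC,w) = \mu(\mC,w)$, as required.

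A small bookkeeping point worth checking explicitly is that the minimum distance and $\fq$-dimension of $A\mC$ equal those of $\mC$ (so that the hypothesis $d \le w \le n$ of Definition \ref{def:u-inv} is meaningful for $A\mC$ with the same $d$): left multiplication by $A \in \GL_n(\fq)$ is an $\fq$-linear rank-preserving bijection $\fq^{n\times m} \to \fq^{n\times m}$, since $\rk(AX) = \rk(X)$, so it carries $\mC$ to a code with the same parameters. I do not expect any genuine obstacle here; the only thing to be careful about is the direction of the support map under $A$ and the matching direction of the bijection between the $\mC_=$ and $(A\mC)_=$ sets — once $\sigma(AX) = A\sigma(X)$ is recorded, the rest is a direct translation.
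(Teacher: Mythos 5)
Your argument is correct and is essentially the paper's own proof: both rest on the observation that $\sigma(AX)=A\,\sigma(X)$ and that left multiplication by $A$ is a rank-preserving bijection, so that $X\mapsto AX$ carries $\mC_=(U)$ bijectively onto $(A\mC)_=(AU)$ and matches $w$-supports of $\mC$ with those of $A\mC$. You simply spell out the bookkeeping (the inverse map, the correspondence of supports) more explicitly than the paper does.
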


\begin{proof}
	The map $f: \mC \longrightarrow A\mC : X \mapsto AX$ is a rank preserving bijection.
	Let $U \leq \fq^n$ be a $w$-dimensional space and let $X \in \mC$. Then $U = \sigma (X)=\{ Xv^\top:v \in \fq^n\}$ 
	if and only if $f(U) = \{AXv^\top:v \in \fq^n \} = \sigma(AX)$. The result follows.
\end{proof}

The following is a rank metric analogue of the Assmus-Mattson Theorem \cite[Theorem 4.2]{AM}. As in the classical case, we show that if the dual code of a $w$-invariant
rank metric code has few weights, then the $w$-supports of $\mC$ form the blocks of a subspace design.

\begin{theorem} \label{th:AM}
	Let $\mC$ be an $\fq$-$[n \times m, k,d]$ code. Let $1 \le t<d$ be an integer, and assume that
	$$|\{1 \le i \le n-t \ : \ W_i(\mC^*) \neq 0\}| \le d-t.$$
	Denote by $d^*$ the minimum distance of $\mC^*$, and let $w,w^*$ be integers satisfying $d \leq w \leq n$ and $d^* \leq w^* \leq n$. Suppose that $\mC$ (resp. $\mC^*$) is $u$-invariant for each $u\leq w$ (resp. $u\leq w^*$).
	Then the $u$-supports of $\mC$ (resp. $\mC^*$) form the blocks of a $t$-design over $\fq$ for each $d\leq u \leq w$ (resp. $d^* \leq u \leq \min \{w^*,n-t\}$).
\end{theorem}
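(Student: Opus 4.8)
The plan is to prove the statement for $\mC$ (the argument for $\mC^*$ being symmetric, after exchanging the roles of the code and its dual and noting that the hypothesis on the number of nonzero weights is the one that constrains $\mC^*$ directly) by a counting argument that shows every $t$-dimensional subspace $T \le \fq^n$ is contained in the same number of $u$-supports, for a fixed $u$ with $d \le u \le w$. The key bookkeeping object is, for a $t$-subspace $T$ and a value $u$, the quantity
$$N_u(T) := |\{X \in \mC : \rk(X) = u,\ \sigma(X) \ge T\}| = |\mC_\geq(T,u)|,$$
where I extend the notation $\mC_\geq(T,u)$ in the obvious way to rank $u$ rather than just rank $d$. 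I will show $N_u(T)$ is independent of $T$ for all $u \le w$ simultaneously, by downward induction on $u$ starting from the top.

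First I would relate $N_u(T)$ to a punctured code. Exactly as in Lemma~\ref{lem:canspaces} and Corollary~\ref{pr:1}, for any $t$-subspace $T$ there is $A \in \GL_n(\fq)$ carrying a basis of $T$ to $\{e_1,\dots,e_t\}$, and the map $X \mapsto \pi_t(AX)$ restricts to a bijection between $\{X \in \mC : \rk(X) = u,\ \sigma(X) \ge T\}$ and $\{Y \in \Pi(\mC,A,t) : \rk(Y) = u - t\}$; the same rank-additivity decomposition $X = \begin{pmatrix} I_t & 0 \\ 0 & Z\end{pmatrix} G$ used in Lemma~\ref{lem:canspaces} works verbatim for any rank $u$ in place of $d$, since the crucial fact is only that $\sigma(X) \ge T$ and $\pi_t$ is injective on $\mC$ (which uses $t < d$). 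Hence $N_u(T) = W_{u-t}(\Pi(\mC,A,t))$. Now Proposition~\ref{prop:punwd} applies: the hypothesis on the nonzero weights of $\mC^*$ in $\{1,\dots,n-t\}$ is precisely what is needed there, so the entire weight distribution of $\Pi(\mC,A,t)$ is determined and independent of $A$. Therefore $N_u(T) = W_{u-t}(\Pi(\mC,A,t))$ is independent of $T$, for every $u$. Call this common value $N_u$.

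The remaining step is to pass from "the number of rank-$u$ codewords whose support contains $T$ is constant" to "the number of $u$-supports containing $T$ is constant." Fix $u$ with $d \le u \le w$. A rank-$u$ codeword $X$ with $\sigma(X) \ge T$ has a $u$-dimensional support $U = \sigma(X) \ge T$, and the number of rank-$u$ codewords with that exact support $U$ is $|\mC_=(U)| = \mu(\mC,u)$ by $u$-invariance. So
$$N_u = \sum_{\substack{U \le \fq^n,\ \dim U = u \\ T \le U \le \sigma(X)\text{ some }X}} \mu(\mC,u) = \mu(\mC,u) \cdot |\{U : U \text{ is a } u\text{-support of } \mC,\ T \le U\}|.$$
If $\mu(\mC,u) \ne 0$ (i.e.\ $u$ is genuinely a weight appearing as a support dimension), this yields $|\{U : U \text{ a } u\text{-support},\ T \le U\}| = N_u / \mu(\mC,u)$, independent of $T$, which is exactly the $t$-design property with $\lambda = N_u/\mu(\mC,u)$. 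If $\mu(\mC,u) = 0$ then there are no $u$-supports at all and the design is empty (trivially a $t$-design, with $\lambda = 0$, and indeed $N_u = 0$ forces consistency). This handles every $u$ in the stated range $d \le u \le w$.

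The one point requiring care---and the main obstacle---is the bijection at rank $u$ rather than at the minimum distance $d$: in Lemma~\ref{lem:canspaces} the surjectivity argument (the $\overline{X}$ trick) used that any nonzero codeword has rank $\ge d$, but for a codeword $X$ with $\pi_t(X)$ of rank $u-t$ one only gets $d \le \rk(X) \le u$, not $\rk(X) = u$, so $X$ need not have rank exactly $u$ and $\sigma(X)$ need not contain $T$. The fix is to reverse the direction: do not try to show $\pi_t$ is surjective from $\mC_\geq(T,u)$ onto all rank-$(u-t)$ elements of $\Pi(\mC,A,t)$; instead, count directly. For each $Y \in \Pi(\mC,A,t)$ of rank $u - t$, lift along the linear surjection $\pi_t \circ (A\,\cdot\,)$ and observe the fibre has a well-defined size; but more cleanly, run the argument entirely on the dual side using Proposition~\ref{prop:punwd}, which already tells us $W_{u-t}(\Pi(\mC,A,t))$ is independent of $A$, and then identify $W_{u-t}(\Pi(\mC,A,t))$ with $N_u(T)$ via the \emph{injective} half of Lemma~\ref{lem:canspaces} together with the observation that a rank-$(u-t)$ word $Y = \pi_t(AX)$ forces, after subtracting off a suitable rank-$t$ completion supported on $E$, a preimage $X'$ with $\sigma(X') \ge T$ and $\rk X' = u$ --- i.e.\ one shows the map $\mC_\geq(T,u) \to \Pi(\mC,A,t)_{u-t}$ is a \emph{bijection} by exhibiting the explicit inverse $Y \mapsto$ (the unique codeword in the coset $Y + \Pi(\mC,A,t)^*\text{-lift}$ whose first $t$ rows are the rows of $A$ applied appropriately), exactly the decomposition $X = \begin{pmatrix} I_t & 0 \\ 0 & Z\end{pmatrix}G$ read backwards. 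I would isolate this as a short lemma generalizing Lemma~\ref{lem:canspaces} from $d$ to arbitrary $u \ge d$, and the rest is the clean double-counting above.
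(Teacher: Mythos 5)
Your base case $u=d$ is correct and is essentially the paper's argument: the double count of pairs $(U,X)$ with $\dim U=d$, $T\le U$, $\sigma(X)=U$, combined with Corollary~\ref{pr:1} and Proposition~\ref{prop:punwd}, gives the $t$-design property for the $d$-supports. The problem is the step $u>d$. You correctly diagnose the obstacle --- for $u>d$ the map $X\mapsto \pi_t(AX)$ is not a bijection from $\mC_{\ge}(T,u)$ onto $\Pi(\mC,A,t)_{u-t}$ --- but your proposed fix does not exist. Since $t<d$, the map $\pi_t\circ(A\,\cdot)$ is \emph{injective} on all of $\mC$, so each $Y\in\Pi(\mC,A,t)_{u-t}$ has exactly one preimage $X\in\mC$, and that preimage is whatever it is: if $\dim(\sigma(AX)\cap E)=t-\ell$ then $\rk(X)=u-\ell$ and $\sigma(X)$ meets $T$ in dimension $t-\ell$. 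For $u=d$ only $\ell=0$ is possible (because $\rk(X)\ge d$), which is why Lemma~\ref{lem:canspaces} works at the minimum distance; for $u>d$ every $\ell\le u-d$ can occur, so $W_{u-t}(\Pi(\mC,A,t))$ genuinely overcounts $N_u(T)$ by the contributions of lower-rank codewords whose supports do not contain $T$. There is no ``explicit inverse'' or ``subtracting off a rank-$t$ completion'' available: modifying a codeword to force its support to contain $T$ produces a matrix that need not lie in $\mC$, and the phrase ``the unique codeword in the coset $Y+\Pi(\mC,A,t)^*$-lift'' does not describe a well-defined operation. The identity $N_u(T)=W_{u-t}(\Pi(\mC,A,t))$ you rely on is simply false for $u>d$.

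The repair is the paper's inductive argument, which you would need to supply: stratify the rank-$(u-t)$ words of $\Pi(\mC,A,t)$ by $\ell=t-\dim(\sigma(AX)\cap E)$ to get
\begin{equation*}
W_{u-t}(\Pi(\mC,A,t))=\lambda_T(u)\,\mu(\mC,u)+\sum_{\ell\ge 1}\qbin{t}{\ell}{q}\,\lambda_{t-\ell,\ell}(u-\ell)\,\mu(\mC,u-\ell),
\end{equation*}
where $\lambda_{t-\ell,\ell}(u-\ell)$ are the intersection numbers of the designs held by the words of rank $u-\ell<u$; these are independent of $T$ by the inductive hypothesis together with the intersection-number lemma for subspace designs. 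Since the left-hand side is independent of $T$ by Proposition~\ref{prop:punwd}, one solves for $\lambda_T(u)$. A secondary issue: your claim that the statement for $\mC^*$ follows ``by symmetry'' is too quick --- the hypothesis bounds the weights of $\mC^*$, not of $\mC$, so the roles cannot be exchanged; the paper instead passes to the shortened code $\Sigma(\mC^*,I,t)=\Pi(\mC,I,t)^*$, counts supports contained in $T^\perp$, and invokes the dual design of Lemma~\ref{lem:dualdesign} (note also the asymmetric cap $u\le n-t$ in the conclusion for $\mC^*$, which signals that the two halves are not interchangeable).
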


\begin{proof}
	
	Throughout this proof, let $T$ be a fixed $t$-dimensional subspace of $\fq^n$, let $E=\langle e_1,...,e_t \rangle$ and let $A$ be a matrix representation of the $\fq$-isomorphism that maps $T$ to $E$. From Lemma \ref{lem:canspaces} and the proof of 
	Corollary \ref{pr:1}, the words of weight $d$ in $\mC$ whose supports contain $T$ are in one-to-one correspondence with the words of weight $d-t$ in the punctured code $\Pi(\mC,A,t)$. That is, there is a bijection from $\mC_\geq(T,d)$ to $\Pi(\mC,A,t)_{d-t}$.
	
	We will count in two ways the elements of the set:
	$$P=\{(U,X) \ : \  U \le \F_q^n, \ \dim(U)=d, \ T \le  U , \ X \in \mC, \  \sigma(X)=U\}.$$
	On one hand, since $\mC$ is $d$-invariant by hypothesis, we have:
	$$|P|= \sum_{T \leq U, \ \dim U=d } |\mC_=(U)| \ = \  \mu(\mC,d)  |\{U \le \F_q^n \ : \  U \mbox{ is a $d$-support of $\mC$}, T \le  U \}|.$$
	On the other hand, since each $X \in \mC$ has a uniquely determined support $U=\sigma(X)$ we have:
	$$|P|= |\{X \in \mC \ : \  \rk(X)=d, \ T \le \sigma(X) \}| =|\mC_\geq(T,d)|.$$ 
	By Corollary \ref{pr:1}, there exists $A \in \GL_n(\F_q)$ such that
	$|P| = W_{d-t}(\Pi(\mC,A,t))$. By Proposition \ref{prop:punwd}, the value of $W_{d-t}(\Pi(\mC,A,t))$ only depends on $k$, $d$ and $t$.
	Now,
	$$W_{d-t}(\Pi(\mC,A,t)) = \sum_{\substack{U \ge T \\ \dim U=d}} |\mC_=(U)|  \ = \ \mu(\mC,d)|  \{ U \ : \ T\leq U \text{ is a }d\text{-support of }\mC\}|,$$
	and hence the number of $d$-supports of $\mC$ that contain $T$ is independent of the choice of $T$ of dimension $t$.
	We deduce that the $d$-supports of $\mC$ form the blocks of a $t$-design over $\fq$.	
	
	We now apply an inductive argument to show that the $u$-supports of $\mC$ form the blocks of a $t$-design for each $d\leq u \leq w$. Suppose the result holds 
	for $d\leq u<w$.  
	Let $\lambda_{k,\ell}(r)$ denote the number of $r$-supports $U$ of $\mC$ satisfying $U \cap L = \{0\}$ and $K \subseteq U$ for a fixed $k$-dimensional space $K$ and an $\ell$-dimensional space $L$. In other words, the $\lambda_{k,\ell}(r)$ are the intersection numbers of the designs held by the words of rank $r$ in $\mC$.
	We will count the number of words of $\Pi(\mC,A,t)$ of weight $w-t$. Let $\lambda_T(w)$ denote the number of $w$-supports of $\mC$ that contain $T$.
	
	Let $Y \in \Pi(\mC,A,t)$ have rank $w-t$. Since $\pi_t$ is an injection on $\mC$, $Y=\pi_t(AX)$ for a unique matrix $X \in \mC$ of rank at most $w$. 
	If $\dim(\sigma(X) \cap T)=t-\ell$, then there exists an invertible matrix $B$ such that 
	$$ AXB =  \begin{pmatrix}
	M_1 &| & M_2 \\
	0   &| & M_3
	\end{pmatrix},$$
	for some matrices $M_i$, where $M_1$ is a $t \times t$ matrix of rank $t-\ell$ and $M_3$ is a matrix of rank $w-t$, satisfying 
	$$\sigma\left(\left[ \begin{array}{c}
	M_1 \\
	0
	\end{array}\right] \right)=\sigma(AX) \cap E \text{ and } \sigma(M_3)=\sigma(Y),$$ in which case $\rk(X) = w-\ell$.
	Therefore every word of rank $w-t$ in $\Pi(\mC,A,t)$ corresponds to a unique word in $\mC$ of weight $w -\ell$ whose support meets $T$ in a space of dimension $t-\ell$.
	
	Now let $V$ be a $(t-\ell)$-dimensional subspace of $T$ 
	and let $L$ be an $\ell$-dimensional subspace of $T$ satisfying $T = V + L$. Clearly, $L \cap \sigma(X)=\{0\}$ for any $X \in \mC$ with
	$\sigma(X) \cap T = V$.
	Then $$|\{ X \in \mC \ : \ \rk(\pi_t(X)) = w-\ell, \ L \cap \sigma(X) =\{0\}, \ V \subset \sigma(X) \}|=\mu(\mC,w-\ell)\lambda_{t-\ell,\ell}(w-\ell).$$ 
	It follows that  
	$$W_{w-t}(\Pi(\mC,A,t))= \lambda_T(w)\mu(\mC,w) + \sum_{\ell=1}^{w-t} \qbin{t}{\ell}{q} \lambda_{t-\ell,\ell}(w-\ell) \mu(\mC,w-\ell).$$
	Then $\lambda_T(w)$ is independent of $T$ and hence the $w$-supports of $\mC$ are the blocks of a $t$-design over $\fq$.
	
	Now consider the words of weight $d^*$ in $\mC^*$. We claim the $d^*$-supports of $\mC^*$ are the blocks of a $t$-design over $\fq$.
	For each $d^*$-dimensional subspace $U \le \fq^n$ define
	\begin{eqnarray*}
		\mS(\mC^*,U) &:=& 
		\{\sigma(X) \ : \  X \in \mC^*(U), \ \rk(X)=d^* \} \\ &=& \{\sigma(X) \ : X \in \mC^*,  \ \sigma(X) \le U, \ \rk(X)=d^*\}.
	\end{eqnarray*}
	Furthermore, define $\mS^\perp(\mC^*,U):=\{ V^\perp  \ : \ V \in \mS(\mC^*,U) \}$.
	Note that $U^\perp$ is contained in every element of $\mS^\perp(\mC^*,U)$ as
	$\sigma(X) \le U$ if and only if $U^\perp \le \sigma(X)^\perp$ for each $X \in \mC^*$.
	
	Recall $E=\langle e_1,...,e_t \rangle$ and so $E^\perp = \langle e_{t+1},...,e_n \rangle$. 
	We now compute $|\mS^\perp(\mC^*,E^\perp)|$, which is equal to $|\mS(\mC^*,E^\perp)|$.

	The map
	$$\Phi : \mC^*(E^\perp) \longrightarrow \Sigma(\mC^*,I,t) \ : \ X \mapsto \pi_t(X),$$
	is a rank preserving bijection, as any $n \times m$ matrix with column space in $E^\perp$ has its first $t$ rows all-zeroes. 
	In particular, 
	$$W_{d^*}(\mC^*(E^\perp)) = W_{d^*}(\Sigma(\mC^*,I,t))= W_{d^*}(\Pi(\mC,I,t)^*)=W_{d^*}(\pi_t(\mC)^*). $$
	By assumption, $\mC^*$ and hence $\mC^*(E^\perp)$ is $d^*$-invariant, so that
	\begin{equation}\label{eq:supp}
	|\mS(\mC^*,E^\perp)|  \mu(\mC^*,d^*)  =  W_{d^*}(\mC^*(E^\perp)) = W_{d^*}(\Pi(\mC,I,t)^*).
	\end{equation}
	
	We can now compute $|\mS^\perp(\mC^*,T^\perp)|$. Since $\sigma(X) \leq E^\perp$ if and only if $\sigma(AX) \leq T^\perp$, we have that 
	\begin{eqnarray*}
		\mS(\mC^*,E^\perp) & =& 
		\{ \sigma(X) \ : \ \sigma(AX) \le T^\perp, \ \rk(X)=d^*, \ X \in \mC^* \}\\
		& = &  
		\{ \sigma(A^{-1} X) \ : \ \sigma(X) \le T^\perp, \ \rk(X)=d^* , \ X \in A(\mC^*) \}
	\end{eqnarray*}
	
	Substituting $A^\top\mC$ for $\mC$ in the above and the fact that 
	$(B\mC)^* = (B^\top)^{-1} \mC^*$ for any $B \in$ GL$_n(\fq)$ yields
	$$	    \mS((A^\top\mC)^*,E^\perp)  = 
	\{ \sigma(A^{-1}X) \ : \ \sigma(X) \le T^\perp,  \ \rk(X)=d^* , \ X \in A(A^\top\mC)^*=\mC^* \}.
	$$
	It follows that $|\mS((A^\top\mC)^*,E^\perp)| = |\mS(\mC^*,T^\perp)|$. 
	
	Since $\mu(A^\top \mC,d^*)=\mu(\mC,d^*)$ from Lemma \ref{lem:inv}, again substituting $A^\top\mC$ for $\mC$ in~(\ref{eq:supp}), we get that
	$$|\mS^\perp(\mC^*,T^\perp)| = |\mS((A^\top\mC)^*,E^\perp)|= \frac{W_{d^*}(\Pi(\mC^*,A^\top,t))}{\mu(\mC^*,d^*)}. $$ 
	Since the weight distribution of $\Pi(\mC^*,A^\top,t))$ is independent of $A$, it follows that the number $|\mS^\perp(\mC^*,T^\perp)| $ is independent
	of $T$ of dimension $t$. 
	
	Let $\mB$ be the set of $d^*$-supports of $\mC^*$ and let $\mB^\perp:=\{U^\perp  : U \in \mB\}$.
	The argument just given shows that every $t$-dimensional subspace $T$ is contained in the same number of elements of $\mB^\perp$, which is therefore a $t$-design over $\fq$. Therefore $\mB$ forms the blocks of $t$-design, which is the {\em dual} subspace design (as described in Lemma \ref{lem:dualdesign}) of $\mB^\perp$.
	
	Using the same inductive argument as before, along with the fact that the weight distribution of $\Pi(\mC,A,t)^*$ is determined, we see that the words of weight $i$ for each $t \le i \leq \min \{n-t,w^*\}$ form the blocks of a $t$-design.
\end{proof}


\section{Vector Rank Metric Codes and $q$-Designs}

Rank metric codes that arise from a vector code $C$ that is linear over an extension field $\F_{q^m}$ are natural candidates as 
$u$-invariant codes (see Definition~\ref{def:u-inv}). As the reader will see, such a code $C$ always induces a $d$-invariant matrix rank metric code, where $d$ denotes the minimum distance of $C$.

\begin{definition}
	Let $\Gamma$ be a basis of $\F_{q^m}$ over $\fq$. 
	For each $x \in \F_{q^m}^n$, we write $\Gamma(x)$ to denote the $n \times m$ matrix over $\fq$ whose $i$th row is the coordinate vector of
	the $i$th coefficient of $x$ with respect to the basis $\Gamma$. In other words, $\Gamma(x)$ is defined via
	$$x_i=\sum_{j=1}^m \Gamma(x)_{ij} \gamma_j, \qquad 1 \le i \le n.$$
	The \textbf{rank} of $x$ is the rank of the matrix $\Gamma(x)$. Note that the rank of $x$ is well-defined, as it does not depend on the choice of the basis $\Gamma$.
\end{definition}

The spaces $\fq^{n\times m}$ and $\F_{q^m}^n$ are isomorphic as $\fq$-vector spaces with respect to the map $x \mapsto \Gamma(x)$, which, by abuse of notation, we denote by $\Gamma$ as well. The image
of an $\fqm$-linear subspace $C \le \fqmn$ under $\Gamma$ is therefore the matrix code
$\Gamma(C)= \{ \Gamma(x): x \in \mC \}$.
We have $$\dim_{\F_q}(\Gamma(C)) = m  \dim_{\fqm}(C).$$

\begin{definition}
	A (linear rank-metric) \textbf{vector code} $C$ is an $\fqm$-subspace of $\fqmn$ with $C \neq \{0\}$ and $C \neq \fqmn$.
	The \textbf{minimum distance} of $C$ is the minimum distance of $\Gamma(C)$, where $\Gamma$ is any $\F_q$-basis of $\fqm$. 
	We say that $C$ is an $\F_{q^m}$-$[n,k,d]$ code if it has $\fqm$-dimension $k$ and $\Gamma(C)$ has minimum rank distance $d$.
	The code $C^\perp$ denotes the \textbf{dual code} of $C$ with respect to the standard inner product of $\fqmn$. 
\end{definition}

We have the following duality result (see \cite[Theorem 21]{alb1}).

\begin{lemma}\label{dualalb}
	Let $C \le \fqmn$ be a vector code.
	Let $\Gamma$ be an $\F_q$-basis of $\fqm$, and let $\overline{\Gamma}$ denote the
	trace-dual basis of $\Gamma$. We have $$\Gamma(C)^* = \overline{\Gamma}(C^\perp).$$
\end{lemma}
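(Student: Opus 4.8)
The plan is to show that $\Gamma(C)^*$ and $\overline{\Gamma}(C^\perp)$ are $\fq$-subspaces of $\fq^{n\times m}$ of the same dimension, and then to verify a single inclusion. First I would record the dimensions: since $\Gamma$ and $\overline{\Gamma}$ are $\fq$-linear isomorphisms $\fqmn \to \fq^{n\times m}$, we have $\dim_{\fq}\overline{\Gamma}(C^\perp) = m\dim_{\fqm}(C^\perp) = m(n-k)$, while $\dim_{\fq}\Gamma(C) = m\dim_{\fqm}(C) = mk$ together with nondegeneracy of the form $(X,Y)\mapsto\Tr(XY^\top)$ gives $\dim_{\fq}\Gamma(C)^* = mn - mk = m(n-k)$. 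Hence it suffices to prove $\overline{\Gamma}(C^\perp) \subseteq \Gamma(C)^*$.

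The core step is a trace computation. Fix $x \in C$ and $y \in C^\perp$; the goal is $\langle \Gamma(x),\overline{\Gamma}(y)\rangle = \Tr\big(\Gamma(x)\overline{\Gamma}(y)^\top\big) = 0$. Expanding the matrix product and reading off the diagonal, $\Tr\big(\Gamma(x)\overline{\Gamma}(y)^\top\big) = \sum_{i=1}^{n}\sum_{j=1}^{m}\Gamma(x)_{ij}\,\overline{\Gamma}(y)_{ij}$. On the other hand, writing $x_i = \sum_{j}\Gamma(x)_{ij}\gamma_j$ and $y_i = \sum_{j'}\overline{\Gamma}(y)_{ij'}\overline{\gamma}_{j'}$ with all coordinates in $\fq$, and using $\fq$-linearity of the field trace $\Tr_{\fqm/\fq}$, one gets $\Tr_{\fqm/\fq}(x_i y_i) = \sum_{j,j'}\Gamma(x)_{ij}\overline{\Gamma}(y)_{ij'}\,\Tr_{\fqm/\fq}(\gamma_j\overline{\gamma}_{j'}) = \sum_{j}\Gamma(x)_{ij}\overline{\Gamma}(y)_{ij}$, where the last equality is exactly the defining property $\Tr_{\fqm/\fq}(\gamma_j\overline{\gamma}_{j'}) = \delta_{jj'}$ of the trace-dual basis. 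Summing over $i$ yields $\Tr\big(\Gamma(x)\overline{\Gamma}(y)^\top\big) = \Tr_{\fqm/\fq}\big(\sum_{i=1}^n x_i y_i\big) = \Tr_{\fqm/\fq}(x\cdot y)$, and since $y \in C^\perp$ we have $x\cdot y = 0$, so this quantity vanishes.

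Finally I would conclude: the computation shows $\overline{\Gamma}(y)$ is orthogonal under $\langle\cdot,\cdot\rangle$ to $\Gamma(x)$ for every $x \in C$, hence to all of $\Gamma(C)$, so $\overline{\Gamma}(y) \in \Gamma(C)^*$; as $y \in C^\perp$ was arbitrary, $\overline{\Gamma}(C^\perp) \subseteq \Gamma(C)^*$, and the dimension count from the first step upgrades this inclusion to equality. The only delicate point is the bookkeeping that identifies the matrix trace $\Tr(XY^\top)$ on $\fq^{n\times m}$ with the field trace of the $\fqm$-valued inner product $x\cdot y$; this identification rests entirely on the relation $\Tr_{\fqm/\fq}(\gamma_j\overline{\gamma}_{j'}) = \delta_{jj'}$, and everything else is routine linear algebra over $\fq$.
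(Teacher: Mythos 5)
Your proof is correct. The paper does not prove this lemma itself but cites it from \cite[Theorem 21]{alb1}, and your argument is precisely the standard one given there: the identity $\Tr\bigl(\Gamma(x)\overline{\Gamma}(y)^\top\bigr)=\Tr_{\fqm/\fq}(x\cdot y)$, obtained from the defining relation $\Tr_{\fqm/\fq}(\gamma_j\overline{\gamma}_{j'})=\delta_{jj'}$ of the trace-dual basis, gives the inclusion $\overline{\Gamma}(C^\perp)\subseteq\Gamma(C)^*$, and the dimension count (both sides have $\F_q$-dimension $m(n-k)$, using nondegeneracy of $(X,Y)\mapsto\Tr(XY^\top)$) upgrades it to equality. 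No gaps.
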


We also recall the Singleton-type bound for $\F_q$-linear matrix codes \cite{del}.

\begin{proposition}\label{stbound}
	Let $\mC$ be an $\fq$-$[n \times m, k,d]$ matrix code. Then the dimension of $\mC$ is upper bounded as follows:
	$$k \le \max\{n,m\}  (\min\{n,m\}-d+1).$$
\end{proposition}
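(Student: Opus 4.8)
The plan is to reduce to the case $n \le m$ (by transposing, since transposition is a rank-preserving $\fq$-linear isomorphism $\fq^{n\times m}\to\fq^{m\times n}$ that carries a code of dimension $k$ and minimum distance $d$ to another such code), so that the claimed bound becomes $k \le m(n-d+1)$. Under this normalization I would exploit the puncturing/projection map $\pi_s$ introduced in the preliminaries. The key observation is that if $\mC$ has minimum rank distance $d$, then deleting any $d-1$ rows cannot kill a nonzero codeword: for a nonzero $X \in \mC$ we have $\rk(X) \ge d$, and deleting $d-1$ rows drops the rank by at most $d-1$, so $\rk(X_S) \ge 1$ whenever $|S| = n-(d-1) = n-d+1$. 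Hence the restriction-to-rows map $X \mapsto X_{\{d, d+1,\dots,n\}}$ (i.e. $\pi_{d-1}$) is injective on $\mC$.

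The steps, in order: (1) Reduce to $n \le m$ via transposition. (2) Apply $\pi_{d-1} : \fq^{n\times m} \to \fq^{(n-d+1)\times m}$ and argue as above that it is injective on $\mC$, using only that every nonzero codeword has rank at least $d$ and that deleting $r$ rows reduces rank by at most $r$. (3) Conclude $k = \dim_{\fq}\mC = \dim_{\fq}\pi_{d-1}(\mC) \le \dim_{\fq}\fq^{(n-d+1)\times m} = m(n-d+1) = \max\{n,m\}(\min\{n,m\}-d+1)$, which is the asserted inequality. Note $d \le n$ always holds (a nonzero matrix in $\fq^{n\times m}$ has rank at most $n$ when $n\le m$), so $n-d+1 \ge 0$ and the target space is well-defined; if $d = n$ the bound reads $k \le m$, which is consistent.

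The only subtle point, and the one I would state most carefully, is the inequality $\rk(X_S) \ge \rk(X) - (n-|S|)$, i.e. that deleting $j$ rows of a matrix decreases its rank by at most $j$; this is immediate since $X_S$ is obtained from $X$ by a sequence of single-row deletions and each single-row deletion changes the rank by $0$ or $1$ (equivalently, the row space of $X_S$ has codimension at most $j$ inside the row space of $X$). Everything else is bookkeeping: the transposition reduction is routine, and the injectivity of a linear map plus the dimension count of the target matrix space give the bound immediately. I expect no real obstacle here — the proposition is essentially the rank-metric translation of the classical Singleton bound, and the puncturing machinery already set up in Section 2 does all the work.
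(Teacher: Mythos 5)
Your argument is correct and is the standard proof of the rank-metric Singleton bound; the paper itself gives no proof of Proposition~\ref{stbound}, simply citing Delsarte, but your injectivity-of-the-row-projection argument is exactly the one the paper invokes later in the proof of Proposition~\ref{projcriterion} (``the projection is injective, as $\mC$ has minimum distance $d$''). The transposition reduction and the observation that deleting a row drops the rank by at most one are both sound, so there is nothing to fix.
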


Codes that meet the bound of Proposition \ref{stbound} are called \textbf{maximum rank distance} (MRD) codes.

\begin{definition}
	An $\F_{q^m}$-$[n,k,d]$ vector code is called \textbf{MRD} if one of the following equivalent properties hold:
	\begin{itemize}
		\item $\Gamma(C)$ is MRD for some $\fq$-basis $\Gamma$ of $\fqm$.
		\item $\Gamma(C)$ is MRD for all $\fq$-basis $\Gamma$ of $\fqm$.
	\end{itemize}
\end{definition}

It is known \cite{del} that $\fqm$-linear vector MRD codes exist for all choices of the admissible parameters. In particular, $\F_q$-linear MRD matrix codes exist for all choices of the admissible parameters.

An $\fq$-$[n \times m, k,d]$ matrix code $\mC \le \mat$ is called \textbf{dually QMRD} if $$d_\rk(\mC)+d_\rk(\mC^\perp)=\min\{m,n\}+1.$$ These codes can be regarded as the best alternative to MRD codes for dimensions that are not a multiple of $\max\{m,n\}$. They share with MRD codes some important rigidity properties. We refer the interested reader to \cite{cruzrav} for further details.
As with MRD codes, an $\F_{q^m}$-$[n,k,d]$ vector code $C$ is dually QMRD if
$\Gamma(C)$ is dually QMRD for some $\fq$-basis $\Gamma$ of $\fqm$.

\begin{lemma} \label{lem:bnd}
	Let $C$ be an $\F_{q^m}$-$[n,k,d]$ code, and let $U \le \F_q^n$ be a subspace of dimension $u \ge d+1$. For all $\F_q$-bases $\Gamma$
	of $\fqm$ we have 
	$$\dim_{\F_q} (\Gamma(C)(U)) \leq m(u - d +1).$$
\end{lemma}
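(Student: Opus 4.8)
The plan is to reduce the statement to the Singleton-type bound of Proposition \ref{stbound} applied to a suitably chosen auxiliary code. First I would fix an $\F_q$-basis $\Gamma$ of $\fqm$ and pick $A \in \GL_n(\F_q)$ carrying $U$ to the standard coordinate subspace $E' = \langle e_1,\dots,e_u\rangle$; since left multiplication by $A$ is rank-preserving and $(A\Gamma(C))(E') = A\,(\Gamma(C)(U))$ as $\F_q$-spaces, this does not change the dimension on the left-hand side. Moreover $A\Gamma(C) = \Gamma(A'C)$ for the vector code $A'C$ obtained by the corresponding $\F_q$-linear coordinate change (which, being given by a matrix over $\F_q$, maps an $\fqm$-linear code to an $\fqm$-linear code of the same dimension and the same minimum rank distance, as the rank of $x$ equals the rank of $\Gamma(x)$). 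So without loss of generality $U = E' = \langle e_1,\dots,e_u\rangle$, and we must bound $\dim_{\F_q}\Gamma(C)(E')$.

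Next I would observe that every matrix in $\Gamma(C)(E')$ has its last $n-u$ rows equal to zero, so $\Gamma(C)(E')$ is naturally an $\F_q$-subspace of $\F_q^{u \times m}$ — in fact it is the image $\Gamma_u(C')$ under the analogous ``matrix of coordinates'' map of the shortened vector code $C' := \{x \in C : x_{u+1} = \cdots = x_n = 0\}$, viewed in $\fqm^{\,u}$. The key point is to control the minimum rank distance of this shortened code from below. Any nonzero $x \in C$ with $x_{u+1} = \cdots = x_n = 0$ is a nonzero codeword of $C$ supported (in the Hamming sense) on the first $u$ coordinates; but I want a \emph{rank} statement. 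Here I would argue directly on matrices: if $X \in \Gamma(C)(E')$ is nonzero then $X$, regarded as an element of $\fq^{n\times m}$, is a nonzero codeword of $\Gamma(C)$, hence $\rk(X) \ge d$. Since the last $n-u$ rows of $X$ are zero, deleting them does not change the rank, so the corresponding element of $\F_q^{u\times m}$ also has rank $\ge d$. Therefore $\Gamma(C)(E')$, viewed as a code in $\F_q^{u \times m}$, has minimum rank distance at least $d$.

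Finally I would apply Proposition \ref{stbound} to this $u \times m$ code. Since $u \ge d+1 > d$ and we are assuming $u \le n$ (and implicitly $d \le \min\{u,m\}$, which holds because a rank-$d$ matrix of size $u\times m$ exists in the code), the bound gives
$$\dim_{\F_q}\bigl(\Gamma(C)(E')\bigr) \le \max\{u,m\}\,\bigl(\min\{u,m\}-d+1\bigr).$$
The remaining task is to check that the right-hand side is at most $m(u-d+1)$. If $u \le m$ this reads $m(u-d+1)$ exactly; if $u > m$ it reads $u(m-d+1)$, and one verifies $u(m-d+1) \le m(u-d+1)$ since it is equivalent to $(u-m)(d-1) \ge 0$, which holds as $u \ge m$ and $d \ge 1$. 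The main obstacle, and the step deserving the most care, is the reduction in the second paragraph: making precise that $\Gamma(C)(U)$ with the last rows zeroed is genuinely a matrix rank-metric code of block size $u \times m$ with minimum distance $\ge d$, so that Proposition \ref{stbound} is legitimately applicable — everything else is bookkeeping and a one-line inequality.
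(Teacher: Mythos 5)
Your proof is correct and follows essentially the same route as the paper: move $U$ to $\langle e_1,\dots,e_u\rangle$ by an element of $\GL_n(\F_q)$, view the resulting subcode as a code in $\F_q^{u\times m}$ of minimum rank distance at least $d$, and invoke the Singleton-type bound of Proposition~\ref{stbound}. Your extra check that $\max\{u,m\}(\min\{u,m\}-d+1)\le m(u-d+1)$ when $u>m$ is a detail the paper's one-line application of the bound glosses over, and is a welcome addition.
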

\begin{proof}
	Let $\mC:=\Gamma(C)$, let $E= \langle e_1,...,e_u \rangle$ and let $f:\F_q^n \to \F_q^n$ be an $\F_q$-isomorphism such that $f(U) =E$.
	Then $\mC(U) = (A\mC)(E)$.
	Clearly, $\mC$ and $A\mC$ have the same minimum distance. Therefore we can view $(A\mC)(E)$ as a code in $\F_q^{u \times m}$ of minimum distance at least $d$. By the Singleton bound for the rank metric we deduce that
	$\dim_{\F_q}((A\mC)(E)) \le m(u-d+1)$. 
\end{proof}

Lemma \ref{lem:bnd} implies the $d$-invariance of codes arising from vector rank metric codes. In the proof of the following proposition, we will need the following observation.

\begin{remark} \label{remreader}
	Let $\Gamma$ be a $\F_q$-basis of $\F_{q^m}$. Then for $x \in \F_{q^m}^n$ and all $\alpha \in \F_{q^m}$ with $\alpha \neq 0$ we have
	$\sigma(\Gamma(x))=\sigma(\Gamma(\alpha x))$.
\end{remark}

\begin{proposition}\label{cor:dsupps}
	Let $C$ be an $\F_{q^m}$-$[n,k,d]$ code. Then for all
	$\F_q$-bases $\Gamma$
	of $\fqm$ the code $\Gamma(C)$ is $d$-invariant with
	$$\mu(\Gamma(C),d)=q^m-1.$$
\end{proposition}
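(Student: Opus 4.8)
The plan is to show that every $d$-dimensional subspace $U \le \F_q^n$ that arises as the column space $\sigma(\Gamma(x))$ of some weight-$d$ codeword is the column space of exactly $q^m-1$ such codewords, independently of $U$. First I would fix an $\F_q$-basis $\Gamma$, write $\mC := \Gamma(C)$, and fix a $d$-dimensional subspace $U$ which is a $d$-support of $\mC$. The key structural input is Lemma~\ref{lem:bnd}: taking $u = d$ there, or rather one dimension above the minimum, forces $\dim_{\F_q}(\mC(U'))$ to be small; applied with $u=d$ one expects $\dim_{\F_q}(\mC(U)) \le m$. Combined with the fact that $\mC(U)$ contains a nonzero word (since $U$ is a support) and that $\Gamma(C)$ is the image of an $\fqm$-linear code, I would argue $\mC(U)$ is in fact an $\fqm$-line, i.e. $\mC(U) = \Gamma(\alpha x_0 : \alpha \in \fqm)$ for a single $x_0$ with $\rk(x_0)=d$, hence $\dim_{\F_q}(\mC(U)) = m$ exactly. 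This is the heart of the matter.

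Next I would observe that since $U$ is a $d$-dimensional support, every nonzero word in $\mC(U)$ that has column space \emph{equal} to $U$ must have rank exactly $d$ (its rank is $\le \dim U = d$ and $\ge d$ by the minimum distance), and conversely every rank-$d$ word of $\mC(U)$ has column space of dimension $d$ contained in $U$, hence equal to $U$. So $\mC_=(U)$ is exactly the set of nonzero words of $\mC(U)$, \emph{provided} no nonzero word of $\mC(U)$ has rank strictly less than $d$ — which is immediate from $d_\rk(\mC)=d$. Therefore $|\mC_=(U)| = |\mC(U)| - 1 = q^m - 1$, using the previous paragraph. This is independent of $U$, which is precisely the definition of $d$-invariance with $\mu(\Gamma(C),d) = q^m-1$.

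The one subtlety to handle carefully is the identification $\dim_{\F_q}(\mC(U)) = m$ rather than merely $\le m$. The upper bound $\le m$ comes from Lemma~\ref{lem:bnd} (with $u=d$, giving $m(d-d+1)=m$, after checking the lemma's hypothesis $u \ge d$ is fine at $u=d$, or else invoking it at $u=d$ directly via the Singleton bound as in its proof). For the matching lower bound I would use Remark~\ref{remreader}: if $x$ is any word with $\sigma(\Gamma(x)) = U$, then $\sigma(\Gamma(\alpha x)) = U$ for all $\alpha \in \fqm^\times$, so $\mC(U)$ contains the full $\fqm$-subspace spanned by $x$, which has $\F_q$-dimension exactly $m$ (as $x \ne 0$). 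Hence $\dim_{\F_q}(\mC(U)) = m$ and in fact $\mC(U)$ equals this single $\fqm$-line.

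The main obstacle is making rigorous that $\mC(U)$ cannot be larger than one $\fqm$-line: a priori $\mC(U)$ is only an $\F_q$-subspace of $\mat$, not obviously an $\fqm$-submodule of $C$ pulled back. But Lemma~\ref{lem:bnd}'s dimension bound $\le m$ together with the presence of an honest $\fqm$-line of dimension $m$ inside it closes the gap: an $\F_q$-space of dimension $m$ containing an $\F_q$-space of dimension $m$ must coincide with it. Once this is in hand the rest is bookkeeping with the minimum distance, and the count $q^m-1$ follows. I would also remark that the hypothesis $d \le n$ (and the standing $m,n\ge 2$) guarantees $d$-supports exist, so the statement is non-vacuous, though strictly the invariance claim is a conditional one over all $d$-supports $U$ and needs no existence assertion.
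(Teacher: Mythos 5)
Your proof is correct and follows essentially the same route as the paper: the upper bound $|\mC_=(U)|\le q^m-1$ comes from Lemma~\ref{lem:bnd} (applied at $u=d$, where the Singleton-bound argument in its proof still works, as you note), and the lower bound comes from Remark~\ref{remreader} via the $\fqm$-line $\{\alpha x:\alpha\in\fqm^\times\}$. Your extra observation that $\mC(U)$ is exactly this line and that all its nonzero elements have support equal to $U$ is a slightly more explicit rendering of the same two-sided count.
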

\begin{proof}
	Let $\mC:=\Gamma(C)$, and let $U$ be an $d$-support of $\mC$. Then by Lemma \ref{lem:bnd} we have
	$|\mC_=(U)| \le q^{m}-1$. Let $x \in C$ be a codeword such that $\sigma(\Gamma(x)) =U$. By Remark~\ref{remreader}, the set
	$$\{\Gamma(\alpha x) \ : \ \alpha \in \fqm \setminus \{0\}\} \subseteq \mC$$
	is made of matrices whose support is $U$. Moreover, it has cardinality $q^m-1$.
	Therefore we have $|\mC_=(U)| \ge q^{m}-1$, concluding the proof.
\end{proof}

We can now combine the results obtained so far as follows.

\begin{corollary}\label{th:fqmam}
	Let $C$ be an $\F_{q^m}$-$[n,k,d]$ code. Let $1 \le t<d$ be an integer, and assume that
	$$|\{1 \le i \le n-t \ : \  W_i(C^\perp) \neq 0\}| \le d-t.$$
	Let $d^\perp$ be the minimum distance of $C^\perp$. Then for every $\F_q$-basis of $\fqm$
	the $d$-supports of $\Gamma(C)$ and the $d^\perp$-supports of $\Gamma(C)^*$ form the blocks of a $t$-design over $\fq$.
\end{corollary}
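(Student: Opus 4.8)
The plan is to obtain this statement as a direct specialisation of Theorem~\ref{th:AM} to the matrix code $\mC := \Gamma(C)$, the only real work being the translation of the vector-code data of $C$ into the matrix-code data of $\mC$. First, $\mC$ is an $\fq$-$[n\times m, mk, d]$ code by the identity $\dim_\fq(\Gamma(C)) = m\dim_{\fqm}(C)$ and the definition of the minimum distance of a vector code. By Lemma~\ref{dualalb}, $\mC^* = \overline{\Gamma}(C^\perp)$ with $\overline\Gamma$ the trace-dual basis of $\Gamma$; since the rank of a vector of $\fqm^n$ does not depend on the chosen basis, this identification yields $W_i(\mC^*) = W_i(C^\perp)$ for every $i$ and, in particular, $d_\rk(\mC^*) = d^\perp$. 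Hence the standing hypothesis $|\{1\le i\le n-t : W_i(C^\perp)\neq 0\}|\le d-t$ is precisely the non-zero-weight condition that Theorem~\ref{th:AM} requires of $\mC^*$, and $1\le t<d$ is given.

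For the invariance hypotheses, Proposition~\ref{cor:dsupps} gives that $\mC=\Gamma(C)$ is $d$-invariant, so we may take $w=d$ (note that, since $u$-invariance is defined only for $u\ge d$, asking $u$-invariance for all $u\le w=d$ amounts to asking $d$-invariance). Applying Proposition~\ref{cor:dsupps} to the vector code $C^\perp$ with the basis $\overline\Gamma$ shows that $\mC^*=\overline\Gamma(C^\perp)$ is $d^\perp$-invariant, so we may take $w^*=d^\perp=d_\rk(\mC^*)$. Theorem~\ref{th:AM} now applies and yields: the $d$-supports of $\mC$ form the blocks of a $t$-design over $\fq$ (the range $d\le u\le w=d$ collapsing to $u=d$), and the $u$-supports of $\mC^*$ form the blocks of a $t$-design over $\fq$ for each $d^\perp\le u\le\min\{d^\perp, n-t\}$.

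The point I expect to require genuine attention — hence the main obstacle — is reconciling this last range with the claimed statement about the $d^\perp$-supports of $\Gamma(C)^*$: Theorem~\ref{th:AM} delivers these directly only when $d^\perp\le n-t$. Here one invokes the Singleton bound for vector rank-metric codes, $d\le n-k+1$ and $d^\perp\le k+1$, which forces $d+d^\perp\le n+2$ and hence $d^\perp\le n-d+2\le n-t+1$ since $t\le d-1$. Thus the sole exceptional configuration is $d^\perp=n-t+1$, which additionally forces $d=t+1$ and both $C$ and $C^\perp$ to be MRD. In that case I would argue by hand: using the known count of minimum-weight codewords of an $\fqm$-linear MRD code (cf.~\cite{del}) together with the $d^\perp$-invariance and the value $\mu(\mC^*,d^\perp)=q^m-1$ from Proposition~\ref{cor:dsupps}, one obtains that the number of $d^\perp$-supports of $\mC^*$ equals $\qbin{n}{d^\perp}{q}$, that is, every $(n-t+1)$-dimensional subspace of $\fq^n$ is a $d^\perp$-support; since the collection of all $(n-t+1)$-dimensional subspaces of $\fq^n$ is trivially a $t$-design over $\fq$, the claim follows in this case as well. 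Apart from this edge case the corollary is pure bookkeeping, its content residing entirely in Theorem~\ref{th:AM} and Proposition~\ref{cor:dsupps}.
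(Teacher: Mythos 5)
Your proof follows the paper's route exactly: identify $\mC=\Gamma(C)$ as an $\fq$-$[n\times m,mk,d]$ code, use Lemma~\ref{dualalb} to get $\mC^*=\overline{\Gamma}(C^\perp)$ and hence $W_i(\mC^*)=W_i(C^\perp)$, invoke Proposition~\ref{cor:dsupps} for the $d$- and $d^\perp$-invariance, and apply Theorem~\ref{th:AM} with $w=d$, $w^*=d^\perp$. The one place you go beyond the paper is worth keeping: the paper declares the conclusion an ``immediate consequence'' of Theorem~\ref{th:AM}, but that theorem only covers the $u$-supports of $\mC^*$ for $u\le n-t$, and the Singleton bounds $d\le n-k+1$, $d^\perp\le k+1$ leave open exactly the boundary case $d^\perp=n-t+1$ (reachable, e.g., for an MRD code with $t=d-1$, where the hypothesis on dual weights is vacuously satisfied). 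Your patch for that case is sound: it forces $d=t+1$ and both bounds tight, which (for $d\ge 2$) is only possible when $m\ge n$, so $C$ and $C^\perp$ are MRD, the count $W_{d^\perp}(C^\perp)=\qbin{n}{d^\perp}{q}(q^m-1)$ combined with $\mu(\mC^*,d^\perp)=q^m-1$ shows every $d^\perp$-dimensional subspace is a support, and the full Grassmannian is trivially a $t$-design. So the proposal is correct, matches the paper's argument in substance, and in fact closes a small gap the paper's final sentence elides.
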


\begin{proof}
	From Proposition \ref{cor:dsupps}, $\Gamma(C)$ is $d$-invariant. Denote by $\overline{\Gamma}$ the trace-dual of the basis $\Gamma$. Then by Lemma \ref{dualalb} we have $\Gamma(C)^*=\overline{\Gamma}(C^\perp)$. Therefore by Proposition~\ref{cor:dsupps} the code
	$\Gamma(C^\perp)$ is $d^\perp$-invariant as well.
	Again by Lemma \ref{dualalb}, the codes $C^\perp$ and $\Gamma(C)^*$ have the same
	weight distribution.
	The desired result now follows as an immediate consequence of Theorem \ref{th:AM}.
\end{proof}

In \cite[Theorem 5.3]{eejit}, the authors show that a $\fqm$-$[2d,d,d]$ code with no words of weight $d+1$ give rise to a Steiner system from the supports of its 
words of weight $d$; that result may also be seen to be a consequence of Corollary \ref{th:fqmam}.

\begin{example}
	Let $s$ be a positive integer and let $m=2s$. Let $\{\alpha_1,...\alpha_m\}$ be a basis of $\fqm$ over $\fq$.
	Let $C$ be the $\fqm$-$[m,m-2,2]$ vector rank metric code with parity check matrix
	$$H = \left[  \begin{array}{llll}
	\alpha_1 & \alpha_2 & \cdots & \alpha_m \\
	\alpha_1^{q^s} & \alpha_2^{q^s} & \cdots & \alpha_m^{q^s}
	\end{array}\right]. $$
	It can be checked that 
	$$W_0(C^\perp)=1, \quad W_s(C^\perp) = \frac{(q^{2s}-1)^2}{q^s-1},\quad W_{2s}(C^\perp) = q^{4s}-\frac{(q^{2s}-1)^2}{q^s-1}-1,$$ 
	and that $W_i(C^\perp)=0$ otherwise.
	In particular, $C^\perp$ has $\fq$-ranks $\{0,s,2s\}$ and so $C$ satisfies the hypothesis of Corollary \ref{th:fqmam} with $t=1$.
	That is, $C^\perp$ has exactly one weight $s$ in $\{1,...,2s-1\}$.
	The supports of the codewords of $C$ of rank $2$ form a $1$-design over $\fq$ and the words of rank $s$ in $C^\perp$
	form a $1$-$(m,s,1)$ subspace design, which is in fact a spread of $\fq^n$.
	
\end{example}

In the sequel, we will use the following characterization of 
MRD codes.

\begin{proposition} \label{projcriterion}
	Let $\mC$ be an $\fq$-$[n \times m, k,d]$ code, and assume $m \ge n$. The following are equivalent:
	\begin{enumerate}
		\item $\mC$ is MRD, \label{uu}
		\item $d_\rk(\mC)+d_\rk(\mC^*)=n+2$, \label{dd}
		\item the projection on the last $n-d+1$ rows $\mC \to \F_q^{(n-d+1) \times m}$  is surjective.
	\end{enumerate}
\end{proposition}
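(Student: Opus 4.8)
The plan is to establish the cycle of implications $(1)\Rightarrow(2)\Rightarrow(3)\Rightarrow(1)$. The equivalence $(1)\Leftrightarrow(2)$ should follow quickly from the known facts about the dual of an MRD code: when $m\ge n$, an $\fq$-$[n\times m,k,d]$ code is MRD exactly when $k=m(n-d+1)$, and in that case the Singleton-type bound of Proposition~\ref{stbound} applied to $\mC^*$ (which has dimension $mn-k=m(d-1)$) forces $d_\rk(\mC^*)\ge n-(d-1)+1=n-d+2$; since the reverse inequality $d_\rk(\mC)+d_\rk(\mC^*)\le n+2$ holds for any code meeting Singleton with equality (or can be read off from the MacWilliams identities of Theorem~\ref{th:macwids}), we get $(1)\Rightarrow(2)$. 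Conversely, if $d_\rk(\mC)+d_\rk(\mC^*)=n+2$, then writing the Singleton bound for both $\mC$ and $\mC^*$ and adding $\dim\mC+\dim\mC^*=mn$ shows both bounds must be tight, so $\mC$ is MRD; this gives $(2)\Rightarrow(1)$. I would phrase this part so that it also records the dimension formula $k=m(n-d+1)$, which is needed below.

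For $(1)\Rightarrow(3)$: let $\pi$ denote the projection onto the last $n-d+1$ rows, so $\pi=\pi_{d-1}$ in the notation of the paper, and the codomain is $\F_q^{(n-d+1)\times m}$. The kernel of $\pi$ restricted to $\mC$ consists of the codewords of $\mC$ whose first $n-d+1$... no — whose last $n-d+1$ rows vanish, i.e. those supported on the first $d-1$ coordinates; since $d_\rk(\mC)=d$, any nonzero such codeword would have rank at most $d-1<d$, so $\ker(\pi|_\mC)=\{0\}$ and $\pi|_\mC$ is injective. Hence $\dim\pi(\mC)=k=m(n-d+1)=\dim_{\F_q}\F_q^{(n-d+1)\times m}$, and an injective linear map between spaces of equal finite dimension is surjective. (One must use $m\ge n$ here to know $\min\{m,n\}=n$ so the Singleton bound gives $k\le m(n-d+1)$ with equality for MRD; this is exactly where the hypothesis enters.)

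For $(3)\Rightarrow(1)$: suppose $\pi=\pi_{d-1}$ is surjective onto $\F_q^{(n-d+1)\times m}$. Then $k=\dim\mC\ge\dim\pi(\mC)=m(n-d+1)$, and combined with the Singleton-type bound $k\le m(n-d+1)$ of Proposition~\ref{stbound} (again using $m\ge n$), we conclude $k=m(n-d+1)$, i.e.\ $\mC$ is MRD. I expect the main obstacle to be purely bookkeeping: making sure the index $s=d-1$ in $\pi_s$ lines up with "last $n-d+1$ rows", that the target dimension $m(n-d+1)$ is computed correctly, and that the hypothesis $m\ge n$ is invoked at each place where $\min\{m,n\}=n$ is used. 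A secondary subtlety is justifying $d_\rk(\mC)+d_\rk(\mC^*)\le n+2$ for a general code in step $(2)\Rightarrow(1)$ versus only needing it for MRD codes in $(1)\Rightarrow(2)$; it may be cleaner to prove $(1)\Leftrightarrow(3)$ directly as above and then derive $(2)$ as an easy consequence via the dimension counts, rather than proving a three-cycle through $(2)$.
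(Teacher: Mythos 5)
Your treatment of $(1)\Leftrightarrow(3)$ coincides with the paper's proof: the projection $\pi_{d-1}$ is injective on $\mC$ because any matrix in its kernel is supported on the first $d-1$ rows and hence has rank at most $d-1<d$, so surjectivity is equivalent to $\dim_{\F_q}\mC=m(n-d+1)$, which (via Proposition~\ref{stbound} with $m\ge n$) is equivalent to $\mC$ being MRD. Your derivation of $(2)\Rightarrow(1)$ by adding the Singleton bounds for $\mC$ and $\mC^*$ against $\dim\mC+\dim\mC^*=mn$ is also correct, and in fact slightly more self-contained than the paper's appeal to MRD duality; note that this same addition shows $d_\rk(\mC)+d_\rk(\mC^*)\le n+2$ for \emph{every} code with $m\ge n$, not only for codes meeting the Singleton bound with equality, so the restriction in your parenthetical is unnecessary.

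The genuine gap is in your justification of $(1)\Rightarrow(2)$. You claim that Proposition~\ref{stbound} applied to $\mC^*$, which has dimension $m(d-1)$, ``forces $d_\rk(\mC^*)\ge n-d+2$.'' The inequality runs the other way: the Singleton bound reads $m(d-1)\le m\bigl(n-d_\rk(\mC^*)+1\bigr)$, i.e.\ $d_\rk(\mC^*)\le n-d+2$. A Singleton-type bound can never force a minimum distance to be large; a priori a code of dimension $m(d-1)$ could contain a rank-one matrix. The missing lower bound $d_\rk(\mC^*)\ge n-d+2$ is precisely the assertion that $\mC^*$ is itself MRD, which is Delsarte's duality theorem for MRD codes --- a genuinely nontrivial fact that the paper supplies by citation (``a code is MRD if and only if its dual is also MRD''). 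You must either cite this theorem or prove it. A proof internal to this paper runs as follows: for any subspace $U\le\F_q^n$ of dimension $n-d+1$, Lemma~\ref{lem:cu} applied to $\mC^*$ (used in the same form as in the proof of Corollary~\ref{cor:mrddsupps}) gives $|\mC^*(U)|=q^{m(d-1)-m(d-1)}\,|\mC(U^\perp)|=|\mC(U^\perp)|=1$, since $\dim U^\perp=d-1<d$; hence $\mC^*$ has no nonzero word of rank at most $n-d+1$. With that repair, your argument is complete.
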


\begin{proof}
	The projection is injective, as $\mC$ has minimum distance $d$. Therefore it is surjective if and only if
	$\dim(\mC)=m(n-d+1)$, i.e., if and only if $\mC$ is MRD.
	Since a code is MRD if and only if its dual is also MRD \cite{del}, from the MRD Singleton bound it follows that (\ref{uu}) and (\ref{dd}) are equivalent.
\end{proof}

We will use the following result from \cite[Lemma 28]{alb1}, which is actually a special case of \cite[Lemma 48]{alblatt}.
The following rank metric specialization is more convenient for readers unfamiliar with lattices.

\begin{lemma}\label{lem:cu}
	Let $m \geq n$ and let $\mC$ be an $\fq$-$[n \times m, k,d]$ code. Let $U$ be a $u$-dimensional subspace of $\fq^n$. Then
	$$|\mC(U)| = q^{k-m(n-u)}|\mC^\perp(U)|. $$	
\end{lemma}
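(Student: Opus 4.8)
The plan is to relate the subcode $\mC(U)$ of codewords supported in $U$ to the subcode $\mC^\perp(U)$ of the dual, by identifying both with the kernels of suitably defined evaluation maps and then comparing dimensions via rank–nullity. Fix a complement $W$ of $U$ in $\fq^n$, say $W = \langle e_{u+1},\dots,e_n\rangle$ after replacing $\mC$ by $A\mC$ for a suitable $A \in \GL_n(\fq)$; this is harmless since both sides of the claimed identity are invariant under $X \mapsto AX$ on $\mC$ (using Lemma~\ref{lem:duality}, $(A\mC)^* = (A^\top)^{-1}\mC^*$, together with the fact that $\sigma(AX)\le AU$). With this normalization, $\mC(U)$ consists of those $X\in\mC$ whose last $n-u$ rows vanish, i.e. $\mC(U) = \ker(\pi \colon \mC \to \fq^{(n-u)\times m})$ where $\pi$ is the projection onto the last $n-u$ rows. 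Hence $\dim_{\fq}\mC(U) = k - \dim_{\fq}\pi(\mC)$.

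The key step is to compute $\dim_{\fq}\pi(\mC)$ in terms of $\mC^\perp(U)$. The image $\pi(\mC)$ is exactly the punctured code $\Pi(\mC,I,n-u)$ in the notation of the excerpt (deleting the first $u$ rows amounts to the same after a permutation), and by Lemma~\ref{lem:duality} its dual in $\fq^{(n-u)\times m}$ is the shortened code $\Sigma(\mC^*,I,n-u)$, which consists of the last $n-u$ rows of those codewords of $\mC^*$ whose first $u$ rows vanish. But the matrices in $\mC^*$ whose first $u$ rows vanish — under the chosen normalization — are precisely $\mC^\perp(U)$ (again identifying $\mC^*$ with $\mC^\perp$ via the trace form), and deleting the zero rows is a bijection onto $\Sigma(\mC^*,I,n-u)$. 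Therefore $\dim_{\fq}\pi(\mC)^\perp = \dim_{\fq}\mC^\perp(U)$, and since $\pi(\mC)$ lives in $\fq^{(n-u)\times m}$ we get $\dim_{\fq}\pi(\mC) = m(n-u) - \dim_{\fq}\mC^\perp(U)$. Substituting back,
$$\dim_{\fq}\mC(U) = k - m(n-u) + \dim_{\fq}\mC^\perp(U),$$
which is exactly the claimed identity $|\mC(U)| = q^{k-m(n-u)}|\mC^\perp(U)|$ after exponentiating.

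The main obstacle I expect is bookkeeping rather than anything deep: one must be careful that the duality in Lemma~\ref{lem:duality} is stated with the $(A^\top)^{-1}$ twist, so the reduction to the coordinate subspace $U = \langle e_1,\dots,e_u\rangle$ has to be performed consistently on $\mC$ and on $\mC^*$ simultaneously, and one must check that "column space contained in $U$'' translates correctly to "certain rows vanish'' on both the code and its dual side (this is where the hypothesis $m \ge n$ and the precise form of the trace-dual pairing on $\mat$ enter). Once the identification $\Sigma(\mC^*,I,n-u) \cong \mC^\perp(U)$ (as $\fq$-spaces) is set up cleanly, the dimension count is immediate from rank–nullity and the fact that a linear code and its dual in $\fq^{(n-u)\times m}$ have complementary dimensions summing to $m(n-u)$.
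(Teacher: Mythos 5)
Your overall strategy --- rank--nullity applied to the projection onto the rows outside $U$, combined with the puncturing/shortening duality of Lemma~\ref{lem:duality} --- is sound; note that the paper gives no proof of this lemma at all (it is quoted from \cite[Lemma~28]{alb1}), so a self-contained argument along these lines is reasonable. However, one step is genuinely wrong as written: after normalizing $U=\langle e_1,\dots,e_u\rangle$, the matrices of $\mC^*$ whose first $u$ rows vanish are those whose column space lies in $\langle e_{u+1},\dots,e_n\rangle=U^\perp$, not in $U$. Hence the set you identify with the shortened code is $\mC^*(U^\perp)$, and what your computation actually establishes is
$$|\mC(U)| = q^{k-m(n-u)}\,|\mC^*(U^\perp)|.$$
This is the identity the paper actually uses (Corollary~\ref{cor:mrddsupps} invokes the lemma via the hypothesis $\mC^*(U^\perp)=\{0\}$), so the printed form ``$\mC^\perp(U)$'' is evidently shorthand (or a misprint) for $\mC^*(U^\perp)$; but your write-up silently relabels $\mC^*(U^\perp)$ as ``$\mC^\perp(U)$'' in the final line, which hides the discrepancy rather than resolving it. The literal statement with $\mC^*(U)$ is false: take $q=2$, $n=m=2$, let $\mC$ be the set of matrices whose second row is zero (so $k=2$ and $\mC^*$ is the set of matrices whose first row is zero), and let $U=\langle e_1\rangle$; then $|\mC(U)|=4$ while $q^{k-m(n-u)}|\mC^*(U)|=2^{0}\cdot 1=1$.

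Two smaller points. First, your reduction to a coordinate subspace is only consistent for the corrected statement: replacing $\mC$ by $A\mC$ sends $U$ to $AU$ on the primal side but, via $(A\mC)^*=(A^\top)^{-1}\mC^*$, sends $U^\perp$ to $(A^\top)^{-1}U^\perp=(AU)^\perp$ on the dual side, which is exactly what makes $|\mC^*(U^\perp)|$ invariant under the normalization; the quantity $|\mC^*(U)|$ is not preserved by this substitution, which is another symptom of the same issue. Second, the punctured code you want is $\Pi(\mC,I,u)$ (delete the first $u$ rows, leaving $n-u$ of them), not $\Pi(\mC,I,n-u)$; you flag the permutation, so this is only an indexing slip. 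With the target restated as $|\mC(U)|=q^{k-m(n-u)}|\mC^*(U^\perp)|$ and the identification corrected to $\Sigma(\mC^*,I,u)\cong\mC^*(U^\perp)$, your dimension count is complete and correct.
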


\begin{corollary}\label{cor:mrddsupps}
	Let $\mC$ be an $\fq$-$[n \times m, m(n-d+1),d]$ code and let $U$ be a $d$-dimensional subspace of $\fq^n$. Then
	$|\mC(U)| = q^m$. 
\end{corollary}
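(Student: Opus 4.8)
The plan is to read the statement as a short corollary of Lemma~\ref{lem:cu} together with the fact that, when $\mC$ is MRD, its dual is an MRD code of large minimum distance. First I would record that $\mC$, having $\fq$-dimension $m(n-d+1)$, meets the Singleton bound of Proposition~\ref{stbound} (we are in the regime $m\ge n$ of this part of the paper), so $\mC$ is MRD; hence by Proposition~\ref{projcriterion} the dual code $\mC^*$ has minimum rank distance $d^*=n+2-d$.

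Next I would apply Lemma~\ref{lem:cu} with $U$ as given, so $u=d$ and $\dim(U^\perp)=n-d$. The exponent appearing there is
$$k-m(n-u)=m(n-d+1)-m(n-d)=m,$$
and Lemma~\ref{lem:cu} relates $|\mC(U)|$ to the number of codewords of the dual $\mC^*$ whose support lies in $U^\perp$, giving $|\mC(U)|=q^{m}\,|\mC^*(U^\perp)|$. It then remains only to check that $\mC^*(U^\perp)=\{0\}$: any $Y\in\mC^*$ with $\sigma(Y)\le U^\perp$ has $\rk(Y)\le\dim(U^\perp)=n-d<n+2-d=d^*$, so $Y=0$ since $\mC^*$ has minimum distance $d^*$. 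Therefore $|\mC^*(U^\perp)|=1$ and $|\mC(U)|=q^m$.

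The only point requiring care is recognizing that the correction term in Lemma~\ref{lem:cu} counts dual codewords supported on $U^\perp$ (of dimension $n-d$), not on $U$; it is precisely the MRD property, via $d^*=n+2-d>n-d$, that makes this term trivial. After that the proof is a one-line substitution, with no genuine obstacle. I would also remark that the degenerate cases are covered automatically: when $d=n$ we have $U=\fq^n$, $U^\perp=\{0\}$ and $\mC^*(U^\perp)=\{0\}$ trivially, while when $d=1$ (so $\mC=\fq^{n\times m}$, $\mC^*=\{0\}$) the formula reads $|\mC(U)|=q^m\cdot 1$ directly.
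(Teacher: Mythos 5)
Your proof is correct and follows essentially the same route as the paper: the dimension hypothesis makes $\mC$ (hence $\mC^*$) MRD with $d^*=n-d+2$, so $\mC^*(U^\perp)=\{0\}$, and Lemma~\ref{lem:cu} with $u=d$ gives the exponent $m$. You also correctly read Lemma~\ref{lem:cu} as involving $\mC^*(U^\perp)$ --- which is what the cited source and the paper's own proof use --- even though the lemma as printed writes $\mC^\perp(U)$.
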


\begin{proof}
	Since $\mC$ is MRD by hypothesis, so is its dual code $\mC^*$, which has minimum distance $d^*=n-d+2$.
	Therefore, $\mC^*(U^\perp) = \{0\}$, since no non-zero word in $\mC^*$ has column space contained in an $(n-d)$-dimensional subspace
	of $\fq^n$. The result now follows directly from Lemma \ref{lem:cu}.
\end{proof}

The next result shows that if $m \ge n$ the minimum weight codewords of an MRD $\fqmn$-linear code always hold a trivial design, and that these are the only codes with this property. 
Note that the trivial design has as blocks the collection of all  $k$-dimensional subspaces of $\F_q^n$ and has parameters $$t\mbox{-}\left(n,k,\qbin{n-t}{k-t}{q}\right)_q.$$

\begin{proposition} \label{prop:MRDtrivial}
	Let $C$ be an $\F_{q^m}$-$[n,k,d]$ code, and let $\Gamma$ be any $\F_q$-basis of $\fqm$. Assume $m \ge n$. Then the following are equivalent.
	\begin{enumerate}
		\item $C$ is MRD, \label{uno}
		\item the words of rank $d$ in $\Gamma(C)$ hold a trivial design over $\fq$. \label{due}
	\end{enumerate}
\end{proposition}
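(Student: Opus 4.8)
The plan is to reduce both implications to an $\fq$-dimension count on $\mC:=\Gamma(C)$. Since $m\ge n$, the Singleton bound of Proposition \ref{stbound} reads $\dim_\fq\mC\le m(n-d+1)$, and by definition $C$ is MRD exactly when equality holds. I also record the operative reading of the statement: the rank-$d$ words of $\mC$ hold the trivial design precisely when every $d$-dimensional subspace $U\le\fq^n$ is a $d$-support of $\mC$, that is, $\mC_{=}(U)\neq\emptyset$.

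For the implication from (\ref{uno}) to (\ref{due}): if $C$ is MRD then $\mC$ is an $\fq$-$[n\times m,\,m(n-d+1),\,d]$ code, so Corollary \ref{cor:mrddsupps} gives $|\mC(U)|=q^m$ for every $d$-dimensional $U$. As $\mC$ has minimum distance $d=\dim U$, every nonzero $X\in\mC(U)$ satisfies $d\le\rk(X)\le\dim U=d$, hence $\sigma(X)=U$; thus $\mC_{=}(U)=\mC(U)\setminus\{0\}$ is nonempty, and $U$ is a $d$-support. Since $U$ was arbitrary, the $d$-supports of $\mC$ are precisely all of the $d$-dimensional subspaces of $\fq^n$, i.e.\ the trivial design.

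For the converse, from (\ref{due}) to (\ref{uno}), I would proceed in three steps. \emph{Step 1.} For every $d$-dimensional $U$ one has $|\mC(U)|=q^m$: the lower bound $|\mC(U)|\ge q^m$ follows by fixing $x\in C$ with $\sigma(\Gamma(x))=U$ and noting, via $\fqm$-linearity of $C$ and Remark \ref{remreader}, that the $q^m-1$ matrices $\Gamma(\alpha x)$, $\alpha\in\fqm\setminus\{0\}$, all lie in $\mC(U)$; the upper bound follows by transporting $U$ to $\langle e_1,\dots,e_d\rangle$ by some $A\in\GL_n(\fq)$, observing that $A\mC(U)$ is rank-isometric to a matrix code in $\fq^{d\times m}$ of minimum distance at least $d$, and applying Proposition \ref{stbound} to get $\dim_\fq\mC(U)\le m$. \emph{Step 2.} Lemma \ref{lem:cu}, which relates $|\mC(U)|$ to $|\mC^*(U^\perp)|$ through $|\mC(U)|=q^{\,k-m(n-u)}|\mC^*(U^\perp)|$ for a $u$-dimensional $U$, applied with $u=d$ (using Step 1) and with a $(d-1)$-dimensional $W$ (using $|\mC(W)|=1$ since $\dim W<d$), yields $|\mC^*(V)|=q^{\,m(n-d+1)-k}=:q^e$ for every subspace $V$ of dimension $n-d$ or $n-d+1$, with $e\ge0$ by Proposition \ref{stbound}. \emph{Step 3.} Suppose $e\ge1$ and fix an $(n-d+1)$-dimensional $V'$. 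For each $(n-d)$-dimensional $V\le V'$ we have $\mC^*(V)\subseteq\mC^*(V')$ with $|\mC^*(V)|=|\mC^*(V')|=q^e$, hence $\mC^*(V)=\mC^*(V')$, and therefore
\[
\mC^*(V')=\bigcap_{\substack{V\le V'\\ \dim V=n-d}}\mC^*(V)\ \subseteq\ \mC^*\!\Big(\bigcap_{\substack{V\le V'\\ \dim V=n-d}}V\Big)=\mC^*(\{0\})=\{0\},
\]
contradicting $q^e\ge q$. Hence $e=0$, i.e.\ $\dim_\fq\mC=m(n-d+1)$, so $C$ is MRD.

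The one genuinely substantive point is Step 3: the rigidity observation that once $|\mC^*(V)|$ is constant on the codimension-one subspaces $V$ of $V'$, it must collapse on $V'$ itself, because $\bigcap\{V\le V':\dim V=n-d\}=\{0\}$ (for $\dim V'\ge2$ these are the hyperplanes of $V'$; for $\dim V'=1$ the family is $\{\{0\}\}$ and the inclusion reads $\mC^*(V')=\mC^*(\{0\})=\{0\}$; the hypothesis $n\ge2$ keeps the argument valid in the extreme case $d=1$). The remaining ingredients — Steps 1 and 2 and the forward implication — are routine bookkeeping with Corollary \ref{cor:mrddsupps}, Lemma \ref{lem:cu} and the Singleton bound, and the degenerate case $d=n$ is already covered by Step 1.
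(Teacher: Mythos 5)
Your argument is correct, and while the easy direction (\ref{uno})$\Rightarrow$(\ref{due}) coincides with the paper's (Corollary \ref{cor:mrddsupps} gives $|\Gamma(C)(U)|=q^m$, and any nonzero word of $\Gamma(C)(U)$ with $\dim U=d$ has support exactly $U$), your converse takes a genuinely different route. The paper first reduces to the basis $\Gamma=\{1,\alpha,\dots,\alpha^{m-1}\}$ of powers of a primitive element, picks for each $i\in\{d,\dots,n\}$ a codeword with support $\langle e_1,\dots,e_{d-1},e_i\rangle$, and uses the companion matrix $M(\alpha)$ to show that the $m$ rows $\Gamma(x_i)_iM(\alpha)^j$, $j=0,\dots,m-1$, are $\F_q$-independent; this exhibits the projection onto the last $n-d+1$ rows as surjective, and Proposition \ref{projcriterion} concludes. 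Note that the paper consumes only the $n-d+1$ blocks $\langle e_1,\dots,e_{d-1},e_i\rangle$ of the trivial design. Your proof instead turns the full design hypothesis into the uniform count $|\mC(U)|=q^m$ over all $d$-dimensional $U$ (this is where $\fqm$-linearity enters, via the $q^m-1$ scalar multiples of one word with support $U$, exactly as in Proposition \ref{cor:dsupps}; the upper bound is the $u=d$ case of the argument of Lemma \ref{lem:bnd}), feeds it through the duality of Lemma \ref{lem:cu} to make $|\mC^*(V)|$ constant on dimensions $n-d$ and $n-d+1$, and eliminates the constant with the hyperplane-intersection rigidity of your Step 3 — which is sound, since for $\dim V'\ge 2$ the hyperplanes of $V'$ intersect in $\{0\}$, and you handle $\dim V'\le 1$ separately. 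Both proofs are valid and both genuinely need $\fqm$-linearity (yours via the scalar multiples, the paper's via $M(\alpha)$; the appended-zero-column counterexample defeats either step for merely $\fq$-linear codes). What yours buys is basis-independence and a purely counting/duality mechanism; what the paper's buys is economy in the design data used and an explicit surjection certifying MRD-ness. One housekeeping remark: the displayed statement of Lemma \ref{lem:cu} reads $|\mC^\perp(U)|$, but its use in Corollary \ref{cor:mrddsupps} shows that $|\mC^*(U^\perp)|$ is intended; you have correctly applied the latter form.
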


\begin{proof}
	Suppose that $C$ is MRD. Then $\Gamma(C)$ is MRD as well and so by Corollary~\ref{cor:mrddsupps}, for all $U \le \F_q^n$ of dimension $d$ we have
	$|\Gamma(C)(U)|=q^m$.
	Since $C$ and $\Gamma(C)$ have minimum distance $d$, there exists $X \in \Gamma(C)$ such that $\sigma(X)=U$. Since $U$ was arbitrary, the words of rank $d$ in $\Gamma(C)$ hold the trivial design. This shows that (\ref{uno}) implies (\ref{due}).
	
	Let us show that (\ref{due}) implies (\ref{uno}).
	Suppose that $v \in \fqmn$ is a vector, and that $\Gamma$ and $\Gamma'$ are $\F_q$-bases of $\fqm$. Then it is easy to see that
	$\Gamma'(v)=\Gamma(v)  A$, where $A$ is an invertible matrix. Therefore it suffices to prove that (\ref{due}) implies (\ref{uno}) for a basis $\Gamma$ of our choice.
	
	We henceforth assume that $\Gamma$ is given by the powers of a primitive element $\alpha$ of $\F_{q^m}$, i.e.,
	$\Gamma=\{1,\alpha,\alpha^2,...,\alpha^{m-1}\}$. 
	
	For every $i \in \{d,...,n\}$ we fix $x_i \in C$ with $\sigma(\Gamma(x_i))=\langle e_1,...,e_{d-1},e_i\rangle$. Then for all $i \in \{d,...,n\}$ the projection 
	$\pi_{d-1}(\Gamma(x_i))$ is a matrix having only one non-zero row for all $i$, namely the $(i-d+1)$th row. 
	
	We now fix an arbitrary $i \in \{d,...,n\}$.
	In the sequel, we denote by $M_i$ the $i$th row of a matrix $M$.
	We clearly have
	$$(\Gamma(x_i)M)_i = \Gamma(x_i)_i M$$
	for every matrix $M$.
	Observe that multiplication by $\alpha$ corresponds to multiplication from the right by $M(\alpha)$, the companion matrix of the minimal polynomial of $\alpha$. More precisely, we have
	$\Gamma(\alpha x)=\Gamma(x)M(\alpha)$ for all $x \in \F_{q^m}^n$. 
	
	We will now show that the $m$ vectors 
	\begin{equation} \label{list}\Gamma(x_i)_i ,\ \Gamma(x_i)_iM(\alpha),...,\ \Gamma(x_i)_iM(\alpha)^{m-1} \ \in \F_q^m
	\end{equation} are $\F_q$-linearly independent. Towards a contradiction, suppose that there exist elements
	$\lambda_0,...,\lambda_{m-1} \in \F_q$, not all zero, such that
	$$\sum_{j=0}^{m-1} \lambda_j \Gamma(x_i)_i M(\alpha)^{j}=0.$$
	Then we have
	\begin{equation} \label{par}
	\Gamma(x_i)_i \left(\sum_{j=0}^{m-1} \lambda_j M(\alpha)^{j}\right)=0.
	\end{equation}
	The matrix in parentheses in (\ref{par}) is invertible by the properties of the companion matrix $M(\alpha)$. This implies 
	$\Gamma(x_i)_i=0$, a contradiction, and therefore the vectors in~(\ref{list}) are linearly independent.
	
	All of this shows that the projection $$\Gamma(\mC) \to \F_q^{(n-d+1) \times m}$$ on the last $n-d+1$ rows is surjective. Therefore $\Gamma(C)$ is MRD by Proposition \ref{projcriterion}, which in turn implies that
	$C$ is MRD by definition.
\end{proof}

\begin{remark}
	We can also see that \ref{uno}. implies \ref{due}. in Proposition~\ref{prop:MRDtrivial} by the following simple argument. If $C$ is MRD then its weight distribution is determined~\cite{del}, and in particular 
	$$W_d(C)=W_d(\Gamma(C))=\qbin{n}{d}{q}(q^m-1).$$ 
	From Proposition \ref{cor:dsupps}, $\mu(\Gamma(C),d)=q^m-1$, which is the number of words of $C$ that have the same $d$-support. But then the total number of $d$-supports of $C$ is $$\qbin{n}{d}{q}.$$
\end{remark}

\begin{remark}
	It was shown in \cite{AM} that a Hamming metric code is MDS if and only if 
	its minimum weight codewords hold a trivial design.
	This can be also seen using an argument very similar to that given in Proposition \ref{prop:MRDtrivial}, as we now show.
	If the codewords of weight $d$ in an $\fq$-$[n,k,d]$ code $C$
	hold a trivial design then, 
	in particular, $\{1,2,...,d-1,i\}$ is the support of a word of $C$ for each
	$i \in \{d,...,n\}$.
	If $C'$ is obtained from $C$ by puncturing the first $d-1$ coordinates of $C$ then $C'$ contains all the standard basis vectors of $\fq^{n-d+1}$. Therefore $C'$, and hence $C$, has dimension $n-d+1$. 
\end{remark}

As the next result shows, $\fqm$-linearity is crucial in the proof of Proposition \ref{prop:MRDtrivial}. In fact, the same result does not hold in general unless we assume that the code $\Gamma(C)$ arises from an $\fqm$-linear code, even if we consider the larger class of dually quasi-MRD codes.

\begin{proposition}
	Assume $m \ge n+1$, and let $1 \le d \le n-1$ be an integer. There exists a code
	$\mC \le \F_q^{n \times m}$ with the following properties:
	\begin{enumerate} \setlength\itemsep{0em}
		\item $\mC$ is not MRD,
		\item $\mC$ is not dually QMRD,
		\item the minimum weight codewords of $\mC$ hold the trivial $d$-design.
	\end{enumerate}
\end{proposition}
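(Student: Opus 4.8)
The plan is to realize $\mC$ as a hyperplane (codimension-one subcode) of an $\fq$-linear MRD code of minimum distance $d$, chosen so that its dual picks up a rank-one word. Concretely, I would fix an $\fq$-$[n\times m,\,m(n-d+1),\,d]$ MRD matrix code $\mM\le\fq^{n\times m}$ --- these exist because $m\ge n$ --- together with any rank-one matrix $N\in\fq^{n\times m}$, and set
$$\mC:=\mM\cap\langle N\rangle^{*}=\{X\in\mM:\Tr(NX^{\top})=0\}.$$
Since the dual $\mM^{*}$ is again MRD, of minimum distance $n-d+2\ge 3>1=\rk(N)$, the matrix $N$ does not lie in $\mM^{*}$; hence $\mM\not\subseteq\langle N\rangle^{*}$, so $\mC$ has codimension exactly $1$ in $\mM$, while $N\in\mC^{*}$ because $\mC\subseteq\langle N\rangle^{*}$.

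The crux of the argument is the covering property (3): I must check that thinning $\mM$ down to $\mC$ does not destroy any support class. Let $U\le\fq^{n}$ be an arbitrary $d$-dimensional subspace. By Corollary~\ref{cor:mrddsupps}, $|\mM(U)|=q^{m}$, so $\mM(U)$ is an $m$-dimensional $\fq$-subspace of $\mM$; moreover every nonzero $X\in\mM(U)$ satisfies $d\le\rk(X)=\dim\sigma(X)\le\dim U=d$, hence $\sigma(X)=U$. Working inside $\mM$, which has $\fq$-dimension $m(n-d+1)$, we have $\dim_{\fq}\mC+\dim_{\fq}\mM(U)=\bigl(m(n-d+1)-1\bigr)+m>m(n-d+1)$ because $m>1$, so $\mC\cap\mM(U)\ne\{0\}$. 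Any nonzero matrix in this intersection is a rank-$d$ codeword of $\mC$ with support exactly $U$. Since $U$ was arbitrary, the $d$-supports of $\mC$ are precisely all $d$-dimensional subspaces of $\fq^{n}$, i.e.\ the blocks of the trivial $d$-design; and since $\mC\subseteq\mM$ forces $d_{\rk}(\mC)\ge d$ while $\mC$ contains rank-$d$ words, $\mC$ has minimum distance exactly $d$.

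The remaining two properties are then bookkeeping. For (1): $\dim_{\fq}\mC=m(n-d+1)-1<m(n-d+1)=\max\{n,m\}(\min\{n,m\}-d+1)$, so $\mC$ fails to meet the Singleton bound of Proposition~\ref{stbound} and is not MRD. For (2): $N\in\mC^{*}$ has rank $1$, so $d_{\rk}(\mC^{*})=1$, whence $d_{\rk}(\mC)+d_{\rk}(\mC^{*})=d+1\le n<n+1=\min\{m,n\}+1$, using $d\le n-1$; thus $\mC$ is not dually QMRD.

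The only genuinely nontrivial step is the covering property, and the point of the construction is that it survives a rank-one-sized cut: one needs $\dim\mM(U)=m$ (from Corollary~\ref{cor:mrddsupps}) together with the codimension of $\mC$ in $\mM$ being strictly less than $m$. The same scheme works for any subcode of $\mM$ of codimension at most $m-1$ whose dual contains a matrix of rank at most $n-d$, and a single rank-one matrix always does the job because $\mM^{*}$ has minimum distance $n-d+2>1$. Finally, although the statement does not ask for it, the code $\mC$ cannot equal $\Gamma(C)$ for any $\fqm$-linear $C$: by Proposition~\ref{prop:MRDtrivial} property (3) would then force $\mC$ to be MRD, contradicting (1) --- this is the sense in which $\fqm$-linearity is essential in Proposition~\ref{prop:MRDtrivial}.
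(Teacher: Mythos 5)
Your proof is correct, but it is a genuinely different construction from the one in the paper. The paper starts from an MRD code $\mD \le \F_q^{n\times(m-1)}$ (which exists since $m-1\ge n$) and pads each codeword with a zero column; supports are unchanged, so the trivial design is inherited for free, and the dual automatically acquires minimum distance $1$ because every matrix supported on the appended column lies in it. You instead cut a full-size MRD code $\mM\le\F_q^{n\times m}$ by the hyperplane orthogonal to a rank-one matrix $N$, and the only nontrivial point --- that the trivial design survives the cut --- is handled by the dimension count $\dim_{\F_q}\mM(U)=m>1=\mathrm{codim}_{\mM}\,\mC$, with $\dim_{\F_q}\mM(U)=m$ supplied by Corollary~\ref{cor:mrddsupps} and the hypothesis $d\le n-1$ guaranteeing $\rk(N)=1<n-d+2=d_\rk(\mM^*)$ so that the cut is proper. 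The two codes are genuinely different (dimensions $(m-1)(n-d+1)$ versus $m(n-d+1)-1$). The paper's route is shorter and needs no counting; yours costs one dimension argument but buys a stronger and more flexible statement --- any subcode of $\mM$ of codimension less than $m$ retains the trivial design, and the dual minimum distance can be tuned by the choice of the cutting functional --- and your closing observation that Proposition~\ref{prop:MRDtrivial} forbids such a $\mC$ from being $\F_{q^m}$-linear is a nice complement to the paper's point. All the bookkeeping (failure of the Singleton bound of Proposition~\ref{stbound}, and $d_\rk(\mC)+d_\rk(\mC^*)=d+1\le n$) checks out.
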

\begin{proof}
	Since $m \ge n+1$, there exists an MRD code $\mD \le \F_q^{n \times (m-1)}$ of minimum distance $d$.  For $M \in \mD$, let $z(M) \in \F_q^{n \times m}$ be the matrix obtained from $M$ by appending a zero column. Define the code 
	$\mC:=\{z(M) : M \in \mD\} \le \F_q^{n \times m}$.
	
	Since $M$ and $z(M)$ have the same support for all $M \in \mD$, the codes $\mD$ and $\mC$ have the same minimum distance. Moreover, the minimum weight codewords of $\mC$
	hold the trivial design. On the other hand, $\mC^*$ has minimum distance 1, and therefore $d_\rk(\mC)+d_\rk(\mC^*)=d+1 \le n-1+1=n$.
	In particular, $\mC$ is neither MRD (by Proposition~\ref{projcriterion}), nor dually QMRD.
\end{proof}


\section{Existence Results}
We now discuss the existence of codes that meet the hypotheses of the Assmus-Mattson Theorem for the rank metric.
We use results on the external distance and the covering radius of a  code to derive necessary conditions for a matrix code $\mC$ to satisfy the hypotheses of Theorem \ref{th:AM}, for $m$ sufficiently large. Using similar arguments, we also obtain results that apply to codes for the Hamming distance.

Recall that the \textbf{external distance} of an $\F_{q}$-$[n\times m,k,d]$ code $\mC$ is defined to be the integer
$$\tau(\mC) := |\{1 \le i \le n  \ : \ W_i(\mC^*) \neq 0 \}|.$$

The {\em external distance bound}~\cite[Theorem 27]{byrneravagnani} for an $\fq$-$[n \times m,k,d]$
code $\mC$ relates the covering radius of $\mC$ to the number of non-zero weights of $\mC^*$.
More precisely, we have:
$$\rho(\mC) \leq \tau(\mC).$$

We also have the following lower bound on the rank-metric covering radius;
see~\cite[Theorem 8.12]{byrravpart} for details.

\begin{theorem}\label{CRmax}
	Let $\mC$ be an $\fq$-$[n \times m,k,d]$ linear code and 
	let $1 \leq r\leq d$ be an integer satisfying 
	$(r-1)m \ge \log_q(4)+n^2/4$. Then
	$$\rho(\mC) \ge d-r+1.$$  
\end{theorem}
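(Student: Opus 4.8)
The plan is to derive the bound from the sphere‑covering inequality for the rank metric together with the Singleton‑type bound of Proposition~\ref{stbound}. Put $s:=d-r$, so $s\ge 0$; note that the hypothesis $(r-1)m\ge\log_q(4)+n^2/4$ can hold only when $r\ge 2$, since $n\ge 2$ makes the right‑hand side positive, so we may assume $r\ge 2$. Since rank‑metric balls are translation invariant, if the balls of rank radius $s$ centred at the codewords of $\mC$ covered all of $\fq^{n\times m}$ we would get $|\mC|\cdot|B^{\rk}(s)|\ge q^{nm}$, where $B^{\rk}(s):=\{X\in\fq^{n\times m}:\rk(X)\le s\}$. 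Hence it is enough to prove the strict inequality $|\mC|\cdot|B^{\rk}(s)|<q^{nm}$: this produces a matrix at rank distance at least $s+1=d-r+1$ from $\mC$, and therefore $\rho(\mC)\ge d-r+1$.

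Next I would bound the volume of the ball. Every matrix of rank at most $s$ has column space contained in some $s$‑dimensional subspace $V\le\fq^n$, and for a fixed $V$ there are exactly $q^{sm}$ matrices all of whose columns lie in $V$; hence $|B^{\rk}(s)|\le\qbin{n}{s}{q}q^{sm}$. Using the standard estimate $\qbin{n}{s}{q}<\kappa_q^{-1}q^{s(n-s)}$, where $\kappa_q:=\prod_{i\ge 1}(1-q^{-i})$ is increasing in $q$ with $\kappa_q\ge\kappa_2>1/4$, this gives the \emph{strict} bound $|B^{\rk}(s)|<4\,q^{s(n+m-s)}$.

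For the size of $\mC$, write $a:=\max\{n,m\}$ and $b:=\min\{n,m\}$, so that $ab=nm$, $a+b=n+m$, and $d\le b$. Proposition~\ref{stbound} gives $|\mC|=q^k$ with $k\le a(b-d+1)$. Substituting $d=s+r$, a direct computation yields
$$
k+\log_q(4)+s(n+m-s)\ \le\ a(b-s-r+1)+\log_q(4)+s(a+b-s)\ =\ nm-\bigl(a(r-1)-s(b-s)-\log_q(4)\bigr).
$$
It remains to check $a(r-1)\ge s(b-s)+\log_q(4)$. Since $0\le s=d-r\le b$, the quadratic bound $s(b-s)\le b^2/4\le n^2/4$ holds, and since $a\ge m$ the hypothesis gives $a(r-1)\ge m(r-1)\ge\log_q(4)+n^2/4\ge\log_q(4)+s(b-s)$. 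Therefore the right‑hand side above is at most $nm$, so $|\mC|\cdot|B^{\rk}(s)|<4\,q^{k+s(n+m-s)}=q^{\log_q(4)+k+s(n+m-s)}\le q^{nm}$, completing the argument.

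Because the argument is a volume count, there is no isolated hard step; the points that need care are keeping the bound on $|B^{\rk}(s)|$ strict — this is exactly where the constant $4>\kappa_q^{-1}$, hence the $\log_q(4)$ summand in the hypothesis, is used — and handling the $\max$/$\min$ asymmetry in the Singleton bound, which is why the hypothesis is phrased in terms of $m$ rather than $n$. One should also record the degenerate cases $s=0$ (where $B^{\rk}(0)=\{0\}$ and $\mC\ne\fq^{n\times m}$ because $d\ge 1$ and $r\ge 2$) and $r=1$ (where the hypothesis is vacuous, its right‑hand side being positive); both are immediate, and in fact the displayed computation already covers them.
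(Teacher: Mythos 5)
Your proof is correct. Note that the paper does not actually prove this statement itself --- it imports it from \cite[Theorem 8.12]{byrravpart} --- so there is no internal proof to compare against; but your sphere-covering argument, combining $|\mC|\cdot|B^{\rk}(s)|\ge q^{nm}$ for $s=\rho(\mC)$ with the estimates $|B^{\rk}(s)|\le\qbin{n}{s}{q}q^{sm}<4\,q^{s(n+m-s)}$ and the Singleton bound $k\le\max\{n,m\}(\min\{n,m\}-d+1)$, is exactly the route the form of the hypothesis $(r-1)m\ge\log_q(4)+n^2/4$ is designed for, and all the steps (the strictness via $\kappa_2>1/4$, the reduction $s(b-s)\le b^2/4\le n^2/4$, and the handling of the $\max/\min$ asymmetry) check out.
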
 

The previous theorem implies that if $m \geq \log_q(4)+n^2/4$, then $\rho(\mC) \ge d-1$. Combining this with the external distance bound we obtain that the number of non-zero weights in the dual code $\mC^*$ is at least $d-1$.

Now suppose that $\mC$ is an $\fq$-$[n \times m,k,d]$ linear code satisfying the hypothesis of Theorem~\ref{CRmax}. For each $1\le t \le d-1$ define $X_t:=|\{ 1 \le i \le n-t : W_i(\mC^*) \neq 0\}| $ and 
$Y_t:=|\{n-t+1 \le i \le n : W_i(\mC^*) \neq 0 \}|$. Then $X_t+Y_t = \tau(\mC)$ and $Y_t \leq t$.
In order for the hypothesis of Theorem \ref{th:AM} to hold for $\mC$, we require that
$$d-1 - Y_t \le \tau(\mC)-Y_t = X_t \leq d-t.$$
In particular, $Y_t \in \{t,t-1\}$.
If $Y_t = t-1$, then we have $X_t \geq d-t$, and hence $X_t=d-t$ by hypothesis. This forces $\tau(\mC) = d-1$.
If $Y_t = t$, then $X_t \geq d-t-1$, so either $X_t= d-t-1$ and $\tau(\mC)=d-1$, or $X_t=d-t$ and $\tau(\mC)=d$.
Therefore we have shown the following result.

\begin{proposition}
	Suppose that $\mC$ is an $\fq$-$[n \times m,k,d]$ linear code satisfying the hypothesis of Theorem \ref{th:AM}. If
	$m \geq \log_q(4)+n^2/4$, then $\tau(\mC) \in \{d-1,d\}$.
\end{proposition}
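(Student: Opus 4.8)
The plan is to combine a lower bound on the covering radius of $\mC$ with the external distance bound, and then to confront the resulting inequality with the counting constraint built into the hypothesis of Theorem~\ref{th:AM}.

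First I would apply Theorem~\ref{CRmax} with $r=2$. This is legitimate: the hypothesis of Theorem~\ref{th:AM} requires an integer $t$ with $1 \le t<d$, which forces $d \ge 2$, so $r=2 \le d$; and the assumption $m \ge \log_q(4)+n^2/4$ is precisely the condition $(r-1)m \ge \log_q(4)+n^2/4$ for $r=2$. Theorem~\ref{CRmax} then gives $\rho(\mC) \ge d-1$, and the external distance bound $\rho(\mC) \le \tau(\mC)$ yields $\tau(\mC) \ge d-1$.

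Next I would fix an integer $t$ with $1 \le t<d$ witnessing the hypothesis of Theorem~\ref{th:AM}, and split the non-zero weights of $\mC^*$ --- all of which lie in $\{1,\dots,n\}$ --- according to the two ranges $\{1,\dots,n-t\}$ and $\{n-t+1,\dots,n\}$, with respective counts $X_t := |\{1 \le i \le n-t : W_i(\mC^*) \ne 0\}|$ and $Y_t := |\{n-t+1 \le i \le n : W_i(\mC^*) \ne 0\}|$. Then $X_t + Y_t = \tau(\mC)$, while $Y_t \le t$ because $\{n-t+1,\dots,n\}$ has $t$ elements, and the hypothesis of Theorem~\ref{th:AM} is exactly the inequality $X_t \le d-t$. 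Combining these facts, $Y_t = \tau(\mC) - X_t \ge (d-1)-(d-t) = t-1$, so $Y_t \in \{t-1,t\}$. If $Y_t = t-1$, then $X_t = \tau(\mC)-(t-1) \ge d-t$, which together with $X_t \le d-t$ forces $X_t = d-t$ and hence $\tau(\mC) = d-1$. If $Y_t = t$, then $X_t \ge d-1-t$, so $X_t \in \{d-t-1,d-t\}$ and $\tau(\mC) = X_t + t \in \{d-1,d\}$. In all cases $\tau(\mC) \in \{d-1,d\}$.

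Since every step is an elementary inequality, there is no genuine obstacle; the only points that need care are verifying that $r=2 \le d$ so that Theorem~\ref{CRmax} is applicable, and that $\tau(\mC)$ counts only weights in $\{1,\dots,n\}$, so that $X_t$ and $Y_t$ together account for all of $\tau(\mC)$.
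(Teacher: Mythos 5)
Your proof is correct and follows essentially the same route as the paper: apply Theorem~\ref{CRmax} with $r=2$ together with the external distance bound to get $\tau(\mC)\ge d-1$, then split the dual weights into the counts $X_t$ and $Y_t$ and run the same two-case analysis on $Y_t\in\{t-1,t\}$. Your explicit check that $d\ge 2$ (so that $r=2$ is admissible) is a point the paper leaves implicit, but otherwise the arguments coincide.
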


In the case that $C$ is an $\fqm$-$[n,k,d]$ linear code, it has been shown in \cite[Proposition 8.4]{byrravpart} that for sufficiently large
$m$ one has $ \rho(C) = n-k$. Therefore
by the {external distance bound} we have:
\begin{equation}\label{eq:tau}
d-1 \le n-k = \rho(C) \le \tau(C) \in \{d-1,d\}.
\end{equation}
Thus 
$n-k \in \{d-1,d\}$ if $m \ge n$ is sufficiently large.

We remark that if $\tau(C)=d$, then we must have $n-k=d$. To see this, suppose that $n-k=d-1$. Then $C$ is MRD, so its dual code has $n-k$ weights yielding the contradiction $\tau(C) =n-k<d$.
Therefore, if $m \ge n$ is sufficiently large then from~(\ref{eq:tau}) either 
\begin{itemize}
	\item $\tau(C)=d-1=n-k$, in which case $C$ is MRD,
	\item $\tau(C)=\rho(C)=d = n-k$, in which case $C$ is not MRD.
\end{itemize}

The next result follows directly from (\ref{eq:tau}).
\begin{proposition}
	Let $C$ be an $\fqm$-$[n,k,d]$ linear code and let $1\leq t \leq d-1$. If $m \ge n$ is sufficiently large and if $C$ satisfies the hypothesis of Theorem \ref{th:AM}, then $d \in \{n-k,n-k+1\}$. Furthermore, if $d=n-k+1$ then $C$ holds the trivial design.
\end{proposition}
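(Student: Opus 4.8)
The plan is to read the statement off directly from the chain of inequalities~(\ref{eq:tau}) together with the MRD characterisation in Proposition~\ref{prop:MRDtrivial}; no new computation is needed, only bookkeeping of the ``sufficiently large $m$'' hypotheses. Concretely, I would first fix $m \ge n$ large enough that every threshold used in the paragraphs preceding the statement is met simultaneously: $m \ge \log_q(4) + n^2/4$, so that Theorem~\ref{CRmax} and the displayed external distance bound force $\tau(C) \in \{d-1,d\}$ under the Assmus--Mattson hypothesis, together with $m$ large enough for \cite[Proposition~8.4]{byrravpart} to give $\rho(C) = n-k$. Taking $m$ beyond the maximum of these finitely many bounds, the whole chain~(\ref{eq:tau}) is valid.

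Now for the first assertion. The Singleton bound for vector rank metric codes gives $d \le n-k+1$, i.e.\ $d-1 \le n-k$, while $n-k = \rho(C) \le \tau(C) \le d$ from~(\ref{eq:tau}) gives $n-k \le d$. Hence $n-k \in \{d-1,d\}$, that is, $d \in \{n-k,\,n-k+1\}$.

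Finally, suppose $d = n-k+1$. Then $C$ meets the Singleton bound with equality, so $C$ is an MRD code. Since $m \ge n$, Proposition~\ref{prop:MRDtrivial} applies and shows that for every $\fq$-basis $\Gamma$ of $\fqm$ the words of rank $d$ in $\Gamma(C)$ hold the trivial design over $\fq$ --- equivalently, the minimum weight codewords of $C$ hold the trivial $t$-$\left(n,d,\qbin{n-t}{d-t}{q}\right)_q$ design described just before Proposition~\ref{prop:MRDtrivial}. This is the desired conclusion. The only point requiring any care is the one noted above, namely that ``$m$ sufficiently large'' must be chosen to validate $m \ge n$, $m \ge \log_q(4)+n^2/4$, and the implicit bound of \cite[Proposition~8.4]{byrravpart} all at once; there is no genuine obstacle, since the substantive work is already contained in~(\ref{eq:tau}) and Proposition~\ref{prop:MRDtrivial}.
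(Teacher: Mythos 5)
Your proposal is correct and follows essentially the same route as the paper: the first claim is exactly the chain (\ref{eq:tau}) (Singleton giving $d-1\le n-k$, the covering-radius result giving $n-k=\rho(C)\le\tau(C)$, and the preceding proposition giving $\tau(C)\le d$), and the trivial-design claim is the observation that $d=n-k+1$ makes $C$ MRD so that Proposition~\ref{prop:MRDtrivial} applies. Your explicit bookkeeping of the various ``sufficiently large $m$'' thresholds is a reasonable tidying of what the paper leaves implicit.
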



\begin{remark}
	In fact, a similar conclusion can be drawn for Hamming metric codes satisfying the classical Assmus-Mattson Theorem \cite[Theorem 4.2]{AM}. For sufficiently large $q$, the Hamming metric covering radius of an $\fq$-$[n,k,d]$ code is equal to~$n-k$; see~\cite[Theorem 8.3]{byrravpart}. Therefore, from the Hamming metric external distance bound~\cite[Chapter 6]{macws} we have that $n-k = \rho(\mC) \leq \tau(\mC)$. We summarize this as follows.
\end{remark}

\begin{proposition}
	Let $C$ be an $\fq$-$[n,k,d]$ linear code and let $1\leq t \leq d-1$. If $q$ is sufficiently large and if $C$ satisfies the hypothesis of the classical Assmus-Mattson Theorem, then $d \in \{n-k,n-k+1\}$. Furthermore, if $d=n-k+1$ then $C$ holds the trivial design.
\end{proposition}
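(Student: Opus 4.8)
The plan is to transcribe, into the Hamming setting, the argument used above for $\fqm$-linear codes around equation~(\ref{eq:tau}). Recall that the hypothesis of the classical Assmus-Mattson Theorem \cite[Theorem 4.2]{AM} for $C$ and the chosen $t$ says precisely that $C^\perp$ has at most $d-t$ nonzero weights among $\{1,\dots,n-t\}$. For $q$ sufficiently large the Hamming covering radius of an $\fq$-$[n,k,d]$ code equals $n-k$ by \cite[Theorem 8.3]{byrravpart}, and the Hamming external distance bound \cite[Chapter 6]{macws} gives $n-k=\rho(C)\le\tau(C)$, where $\tau(C):=|\{1\le i\le n:W_i(C^\perp)\neq 0\}|$.

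First I would split $\tau(C)$ into a ``low'' and a ``high'' part exactly as in the rank-metric discussion: set $X_t:=|\{1\le i\le n-t:W_i(C^\perp)\neq 0\}|$ and $Y_t:=|\{n-t+1\le i\le n:W_i(C^\perp)\neq 0\}|$, so that $X_t+Y_t=\tau(C)$, $Y_t\le t$, and $X_t\le d-t$ by hypothesis. Since the Singleton bound gives $d\le n-k+1$, that is $n-k\ge d-1$, this yields the chain
$$d-1\le n-k=\rho(C)\le\tau(C)=X_t+Y_t\le (d-t)+t=d,$$
from which $n-k\le d$ and hence, together with Singleton, $d\in\{n-k,n-k+1\}$. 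This is the Hamming analogue of~(\ref{eq:tau}); observe that no substitute for the condition $(r-1)m\ge\log_q(4)+n^2/4$ appearing in Theorem~\ref{CRmax} is needed, since $\rho(C)\ge d-1$ now holds automatically from the Singleton bound.

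For the final assertion, suppose $d=n-k+1$, so that $C$ is MDS. Then, as recalled in a remark above (see \cite[Theorem 4.2]{AM}), the minimum weight codewords of an MDS code hold the trivial design; concretely, the MDS weight distribution produces exactly $\binom{n}{d}(q-1)$ codewords of weight $d$, which fall into $\binom{n}{d}$ scalar-equivalence classes of common support, so every $d$-subset of $\{1,\dots,n\}$ occurs as a minimum weight support and the minimum weight codewords form the blocks of the trivial $t$-$(n,d,\binom{n-t}{d-t})$ design. One could also invoke the classical MDS-versus-trivial-design characterization directly, as was done for the rank metric in Proposition~\ref{prop:MRDtrivial}.

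There is no real obstacle here: the proof is a direct transcription of the $\fqm$-linear case treated just above. The only points deserving care are citing the correct large-$q$ statement $\rho(C)=n-k$ for the Hamming covering radius and keeping the index ranges defining $X_t$ and $Y_t$ aligned with the interval $\{1,\dots,n-t\}$ in the Assmus-Mattson hypothesis.
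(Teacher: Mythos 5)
Your proposal is correct and follows essentially the same route as the paper: the paper's own justification is contained in the remark immediately preceding the proposition, combining $\rho(C)=n-k$ for large $q$ with the external distance bound and the same $X_t$, $Y_t$ count to squeeze $n-k$ between $d-1$ and $d$, and then invoking the MDS/trivial-design equivalence for the case $d=n-k+1$. Your explicit support-counting argument via the MDS weight distribution is a slightly more self-contained justification of that last step, but the substance is identical.
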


In the Hamming metric, the canonical example of a code yielding a 2-design (the Fano plane) is the binary Hamming $[7,4,3]$ code. This can be easily seen using the classical Assmus-Mattson Theorem, observing that its dual code, the simplex code, has only one non-zero weight. 
In contrast to this, the next result shows that in the rank metric any code whose dual has only one non-zero weight must have minimum distance upper bounded by 2.

\begin{proposition}
	Let $\mC$ be an $\fq$-$[n\times m,k,d]$ matrix code. Suppose that $\mC^*$ has only one non-zero weight. Then
	$d\le 2$. 
\end{proposition}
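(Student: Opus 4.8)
The plan is to assume, towards a contradiction, that $d := d_\rk(\mC) \ge 3$ and to extract everything needed from a \emph{single} instance of the rank-metric MacWilliams identities (Theorem~\ref{th:macwids}) together with the Singleton bound (Proposition~\ref{stbound}). First I would normalise by transposing: since $\langle X^\top, Y^\top \rangle = \langle X, Y \rangle$ for all $X,Y$, one has $(\mC^\top)^* = (\mC^*)^\top$, and transposition preserves ranks, so $\mC^\top \le \fq^{m \times n}$ satisfies the same hypotheses and has the same minimum distance as $\mC$. Hence I may assume $m \ge n$. Let $w$ be the unique index $i \ge 1$ with $W_i(\mC^*) \neq 0$; then $1 \le w \le n$, and $W_0(\mC^*) = 1$, $W_w(\mC^*) = |\mC^*| - 1 = q^{mn-k} - 1$, with $W_i(\mC^*) = 0$ otherwise. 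Also $W_0(\mC) = 1$ and, since $d \ge 2$, $W_1(\mC) = 0$.

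The key step is then to write out Theorem~\ref{th:macwids} for $\ell = 1$, which with the above data reads
\[
\qbin{n}{1}{q} + (q^{mn-k} - 1)\qbin{n-w}{1}{q} = q^{m(n-1)-k}\qbin{n}{1}{q} .
\]
If $w \le n-1$, then $\qbin{n-w}{1}{q} \neq 0$; clearing the factor $(q-1)^{-1}$ and setting $s := q^{m(n-1)-k} > 0$, so that $q^{mn-k} = q^m s$, this rearranges to
\[
s\,(q^{m+n-w} - q^m - q^n + 1) = q^{n-w} - q^n .
\]
The right-hand side is $q^{n-w}(1 - q^w)$, which is strictly negative since $w \ge 1$; on the left, using $m \ge n$ and $n - w \ge 1$, the coefficient of $s$ is $q^m(q^{n-w} - 1) - (q^n - 1) \ge q^n(q-1) - (q^n - 1) = q^n(q-2) + 1 \ge 1 > 0$, so the left-hand side is strictly positive — a contradiction. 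Therefore $w = n$.

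It then remains to handle $w = n$, where $\qbin{n-w}{1}{q} = \qbin{0}{1}{q} = 0$, so the identity collapses to $q^{m(n-1)-k} = 1$, i.e. $k = m(n-1)$. Feeding this into the Singleton bound of Proposition~\ref{stbound}, which for $m \ge n$ reads $k \le m(n - d + 1)$, gives $m(n-1) \le m(n-d+1)$, hence $d \le 2$, contradicting $d \ge 3$; this finishes the proof. The one step I expect to require care is the sign estimate in the case $w \le n-1$: it is exactly there that the normalisation $m \ge n$ is used, which is why I would make the transpose reduction at the very outset rather than trying to work with a fixed instance of the identities.
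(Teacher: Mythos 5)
Your proof is correct, and it takes a genuinely different route from the paper's. The paper also reduces to $m \ge n$, but then argues via the covering radius: one non-zero dual weight gives external distance $\tau(\mC) \le 1$, the external distance bound gives $\rho(\mC) \le 1$ (and in fact $=1$), so the balls of radius $1$ about codewords cover $\F_q^{n\times m}$, forcing $|\mC| \ge q^{mn}/b(1)$ with $b(1) = 1+(q^n-1)(q^m-1)/(q-1) < q^{2m}$; combined with the Singleton bound this yields $d < 3$. You instead extract everything from the single MacWilliams identity at $\ell = 1$ (using, correctly, the exponent $q^{m(n-\ell)-k}$, as the paper itself does later despite the typo in its statement of Theorem~\ref{th:macwids}), split on whether the unique dual weight $w$ satisfies $w \le n-1$ or $w = n$, kill the first case by a sign argument (which is exactly where $m \ge n$ is needed, and your transpose reduction $(\mC^\top)^* = (\mC^*)^\top$ is valid since $\Tr(X^\top Y) = \Tr(XY^\top)$), and in the second case deduce $k = m(n-1)$ and invoke Singleton. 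Your argument is more self-contained --- it avoids importing the external distance and covering radius machinery from \cite{byrneravagnani} --- and it yields strictly more information in the boundary case: when $d = 2$ it pins down $w = n$ and $k = m(n-1)$, i.e.\ $\mC$ must be MRD. The paper's argument is shorter given the covering-radius results it already has in hand, and fits the theme of Section 5, but either proof is complete.
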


\begin{proof}
	We can assume $m \ge n$ without loss of generality. Since $\mC^*$ has only one non-zero weight, we have
	$\tau(\mC) \le 1$. By the external distance bound \cite[Theorem 27]{byrneravagnani} we then have
	$\rho(\mC) \le 1$. Since $m,n \ge 2$ by assumption, the code $\mC$ cannot be the entire space, and therefore $\rho(\mC)=1$.
	As a consequence, the balls of radius 1 centered at the codewords of $\mC$ cover the entire space. In particular,
	$$|\mC| \cdot b(1) \ge q^{mn},$$
	where
	$b(1):=1+(q^n-1)(q^m-1)/(q-1)$ is the cardinality of a ball of radius 1 in the metric space $(\mat,d_\rk)$.
	By the Singleton bound (Proposition \ref{stbound}) we conclude that
	$$\frac{q^{mn}}{b(1)} \le |\mC| \le q^{m(n-d+1)}.$$
	Therefore
	$$d \le 1+\log_q(b(1))/m <3,$$
	where the latter inequality follows from the fact that $b(1)<q^{2m}$, as the reader can verify using elementary methods from Calculus. This implies $d \le 2$, as desired.
\end{proof}

\bibliographystyle{siamplain}
	
\end{document}